
\documentclass{birkjour}
%
%
%
\usepackage{amsmath,amssymb,amsthm,mathtools,mathrsfs}

\usepackage{bm}

 \newtheorem{thm}{Theorem}[section]
 
 \newtheorem{lem}[thm]{Lemma}
 \newtheorem{prop}[thm]{Proposition}
 \theoremstyle{definition}
 
 \theoremstyle{remark}
 \newtheorem{rem}[thm]{Remark}
 
 \numberwithin{equation}{section}

\newcommand{\R}{\mathbb{R}}

\newcommand{\N}{\mathbb{N}}
\newcommand{\PP}{\mathbb{P}}
\newcommand{\EE}{\mathbb{E}}

\DeclareMathOperator{\dive}{div}
\DeclareMathOperator{\curl}{curl}
\newcommand{\norm}[1]{\left\lVert #1 \right\rVert}

\newcommand{\Ezero}[1]{\mathcal{E}_0^{#1}}
\newcommand{\Eone}[1]{\mathcal{E}_1^{#1}}

\usepackage[hidelinks,hypertexnames=false]{hyperref}

\begin{document}

%
%
%
%
%
%
%
%
%

\title[Uniqueness of invariant measures for stochastic anisotropic NS]{Uniqueness of invariant measures for  stochastic damped anisotropic Navier--Stokes equations}

\author[]{Siyu Liang}

\address{School of Mathematics and Statistics, Nanjing University of Science and Technology, Nanjing, China;\\
 Department of Mathematics, Bielefeld University, D-33615 Bielefeld, Germany
}

\email{liangsiyu1994@163.com}

\thanks{}

\subjclass{Primary 60H15; Secondary 35Q35, 37A25 }

\keywords{anisotropic Navier--Stokes; damping; Hilbert--Schmidt norm; coupling method; invariant measures.}

\date{\today}
\dedicatory{}

\begin{abstract}
We study a two-dimensional Navier--Stokes system with anisotropic viscosity, linear damping term, and an additive noise on the whole space $\mathbb{R}^2$. For this model we prove uniqueness of invariant measures when the damping coefficient is sufficiently large compared to the noise intensity. The argument is based on an asymptotic coupling method and relies on anisotropic energy estimates together with exponential-type estimates for the $H^1$-energy. Since no Poincar\'e inequality is available on $\mathbb{R}^2$, the damping term is essential even for the existence of invariant measures. Our result applies to general additive noise without any non-degeneracy condition and remains valid even in the deterministic case $\sigma\equiv0$.
\end{abstract}

\maketitle
\section{Introduction}
    In this paper, we consider the following damped anisotropic Navier--Stokes equation on  $\R^2$  \begin{equation}
\label{eq:main}
\left\{
\begin{aligned}
&\partial_t u + u\cdot\nabla u - \partial_1^2 u + \lambda u = -\nabla p + \sigma\,\mathrm{d}W,\\
&\dive u = 0,\\
&u\big|_{t=0} = u_0,
\end{aligned}
\right.
\end{equation}
 where $\lambda>0$ is the damping coefficient, $W$ is an $\ell^2$-cylindrical Wiener process, and $\sigma$ is an additive noise operator.
Our aim is to prove that the Markov semigroup associated with \eqref{eq:main} admits a unique invariant measure when the damping is sufficiently strong compared to the noise intensity.

 The anisotropic structure in \eqref{eq:main} arises naturally in geophysical fluid models, where the horizontal and vertical viscosities are different. 
 The  structure  has been investigated in various deterministic and stochastic settings. In our previous work \cite{LiangZhangZhu2021}, the undamped equation (i.e. $\lambda=0$) was studied under anisotropic $L^2$-regularity assumptions on $u_0$ and $\partial_2 u_0$. The present paper focuses instead on the long-time behaviour of the damped dynamics. We show that when $\lambda$ dominates the Hilbert--Schmidt norm of the noise, the system has at most one invariant measure.

We remark that the damping term $\lambda u$ is essential in our setting.
Unlike the bounded-domain results of Flandoli and Gatarek~\cite{FlandoliGatarek1995}, where a Poincar\'e inequality provides a natural control of the $L^2$-energy, no such inequality holds on the whole space $\mathbb{R}^2$. Therefore, even the existence of invariant measures requires an additional large-scale dissipation, which is provided here by the linear damping. This  has already appeared in earlier papers on stochastic Navier--Stokes equations and other models on unbounded domains, see, for instance, \cite{BessaihFerrario2014,BrzezniakFerrario2019,BarbuDaPrato2002,BessaihFerrario2020,BrzezniakFerrarioZanella2023}.

Another related work is the paper by Sun, Qiu and Tang~\cite{SunQiuTang2024}, which established ergodicity for anisotropic stochastic Navier--Stokes and primitive equations on bounded domains by means of an asymptotic coupling argument. In their setting on bounded domains, a Poincar\'e inequality provides
sufficient dissipation so that no damping term is required. Moreover, their analysis relies on a nontrivial stochastic forcing that plays a crucial role in the coupling construction. In contrast, the whole-space case considered here lacks any Poincar\'e inequality, hence we  introduce a linear damping term to recover the necessary energy control at large scales. Note that our argument does not rely on any non-degeneracy of the noise and remains valid even in the deterministic case $\sigma \equiv 0$.

The proof combines anisotropic energy estimates, exponential-type estimates for the $H^{1}$-energy, and an asymptotic coupling method adapted to our setting.  The linear damping term provides the missing large-scale coercivity since there is no
Poincar\'e inequality on $\R^{2}$.  The most important step is the use of anisotropic Gagliardo--Nirenberg interpolation inequalities, which take advantage of the mixed horizontal--vertical structure of the dissipation. 
 The anisotropic estimates are applied to control the nonlinear term in terms of the anisotropic dissipation. Exponential martingale inequalities for the $L^{2}$ and $H^{1}$ cases then yield a set of positive probability on which the solution satisfies bounds which are uniform in time. On this set, a Gr\"onwall argument gives an exponential decay estimate for the difference of two solutions and an asymptotic coupling criterion finally implies uniqueness of invariant measures.

The structure of the paper is as follows:
Section~\ref{Sect2} introduces the notations, Section~\ref{sec3} recalls several preliminary results,
and  
Section~\ref{sec4} contains the main theorem and its proof.
The precise formulation of our main result is given in Theorem \ref{thm:unique}.

\section{Preliminaries}\label{Sect2}
We first recall some function spaces on $\mathbb{R}^{2}$.

\subsection{Function spaces on $\mathbb{R}^{2}$}
On $\mathbb{R}^{2}$, we recall the classical Sobolev spaces:
$$
H^{s}(\mathbb{R}^{2}):=\Bigl\{u\in \mathcal{S}'(\mathbb{R}^{2});\ \| u \|_{H^{s}(\mathbb{R}^{2})}^{2}:=\int_{\mathbb{R}^{2}}(1+| \xi |^{2})^s|\hat{u}(\xi)|^{2}\,\mathrm d\xi<\infty\Bigr\},
$$
where $s\geq 0$, and $\hat{u}$ denotes the Fourier transform of $u$.

\subsection{Some other notations}

  Firstly, since throughout this paper we only consider the case of $\R^2$, we omit the domain of the function and vector spaces when no confusion occurs.

  Let $(\Omega,\mathcal{F},(\mathcal{F}_t)_{t\ge0},\PP)$ be a filtered probability space
satisfying the usual conditions and  $W$ be an $\ell^2$-cylindrical Wiener process
with respect to $(\mathcal{F}_t)_{t\ge0}$.
   For a Hilbert space $X$, we use the notation $L_{2}(\ell^2,X)$ to denote the Hilbert--Schmidt operators from $\ell^2$ to $X$ with the associated Hilbert--Schmidt norm. Throughout the paper we use the following notations:
\begin{itemize}
  \item Denote by $u=(u_1,u_2)$  vector fields on  $\mathbb{R}^2$.
  \item Denote by $\nabla=(\partial_1,\partial_2)$  the gradient, and $\Delta=\partial_1^2+\partial_2^2$ the Laplacian.
  \item For a scalar function $f$, define $\nabla^\perp f := (-\partial_2 f, \partial_1 f)$.
  \item For a vector field $u=(u_1,u_2)$, we write
        $$
        \dive u = \partial_1 u_1 + \partial_2 u_2,
        \qquad
        w=\curl u = \partial_1 u_2 - \partial_2 u_1.
        $$
  \item        The $L^2$ inner product and norm are denoted by 
$(f,g) = \int_{\mathbb{R}^2} f g \, \mathrm{d}x.$
  \item We write 
  $H := \{ u \in L^2(\mathbb{R}^2;\mathbb{R}^2): \dive u = 0 \}$
   and $\tilde H^s := \{ u \in H^s(\mathbb{R}^2;\mathbb{R}^2): \dive u = 0 \}$.
 
    \item
        We denote by $x=(x_1,x_2)=(x_{\mathrm h},x_{\mathrm v})\in\R^2$ the horizontal and vertical variables,
and write $\partial_1=\partial_{x_1}$ and $\partial_2=\partial_{x_2}$ for the corresponding derivatives.
For $p,q\in[1,\infty]$, we define the anisotropic mixed norm
$$
\|f\|_{L_{\mathrm h}^p(L_{\mathrm v}^q)}
:= \Big(\int_{\R} \|f(x_{\mathrm h},\cdot)\|_{L_{\mathrm v}^q}^p\,\mathrm{d}x_{\mathrm h}\Big)^{1/p},
$$
and use similar notation for $L_{\mathrm v}^p(L_{\mathrm h}^q)$. (These anisotropic mixed norms are standard in the analysis of the two-dimensional anisotropic Navier--Stokes equations (see, for example, \cite{LiangZhangZhu2021})).

    \end{itemize}
    
\section{Background Results}\label{sec3}
We first introduce the following cancellation for the nonlinear term, which holds only in the two-dimensional case.
\begin{lem}\label{lem:cancel}
Let $u\in\tilde H^2$ and $\omega=\curl u$. Then
\begin{equation}\label{eq:cancel}
\int_{\R^2} (u\cdot\nabla u)\cdot \Delta u\,\mathrm{d}x = 0 .
\end{equation}
\end{lem}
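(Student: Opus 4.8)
The plan is to pass to the vorticity formulation, where the two-dimensional structure makes the cancellation transparent. Two pointwise identities drive the argument. First, since $\dive u=0$, one checks directly that $\Delta u=\nabla^\perp\omega=(-\partial_2\omega,\partial_1\omega)$: substituting $\partial_1 u_1=-\partial_2 u_2$ and $\partial_2 u_2=-\partial_1 u_1$ gives $\Delta u_1=\partial_2(\partial_2 u_1-\partial_1 u_2)=-\partial_2\omega$ and $\Delta u_2=\partial_1(\partial_1 u_2-\partial_2 u_1)=\partial_1\omega$, which is exactly $\nabla^\perp\omega$. Second, the planar Lamb identity reads $u\cdot\nabla u=\tfrac12\nabla|u|^2+\omega\,u^\perp$, where $u^\perp:=(-u_2,u_1)$; this is verified by expanding each component and cancelling the mixed terms $u_2\partial_1 u_2$ and $u_1\partial_2 u_1$ using the definition of $\omega$.

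With these two identities I would substitute into the integral and split it into two pieces,
\begin{equation*}
\int_{\R^2}(u\cdot\nabla u)\cdot\Delta u\,\mathrm{d}x
=\frac12\int_{\R^2}\nabla|u|^2\cdot\nabla^\perp\omega\,\mathrm{d}x
+\int_{\R^2}\omega\,\bigl(u^\perp\cdot\nabla^\perp\omega\bigr)\,\mathrm{d}x .
\end{equation*}
For the first integral, note that $\nabla^\perp\omega$ is divergence-free, $\dive(\nabla^\perp\omega)=-\partial_1\partial_2\omega+\partial_2\partial_1\omega=0$, so integrating by parts against the gradient $\nabla(|u|^2/2)$ yields zero. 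For the second, the elementary planar identity $u^\perp\cdot\nabla^\perp\omega=u_2\partial_2\omega+u_1\partial_1\omega=u\cdot\nabla\omega$ rewrites it as
\begin{equation*}
\int_{\R^2}\omega\,(u\cdot\nabla\omega)\,\mathrm{d}x
=\frac12\int_{\R^2}u\cdot\nabla(\omega^2)\,\mathrm{d}x
=-\frac12\int_{\R^2}(\dive u)\,\omega^2\,\mathrm{d}x=0,
\end{equation*}
again by $\dive u=0$. Both contributions vanish, and the claim follows.

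The only genuine issue is the rigorous justification of the integrations by parts on the unbounded domain $\R^2$, that is, the vanishing of the boundary terms at infinity. The left-hand side is well defined: by the two-dimensional embeddings $H^2\hookrightarrow L^\infty$ and $H^1\hookrightarrow L^p$ for all $p<\infty$, one has $u\cdot\nabla u\in L^2$ and $\Delta u\in L^2$, whence $(u\cdot\nabla u)\cdot\Delta u\in L^1$. To legitimise the manipulations I would first establish the identity for $u$ in a dense class of smooth, rapidly decaying divergence-free fields, where every boundary term plainly disappears, and then pass to the limit using that both sides define continuous functionals on $\tilde H^2$. This density-and-continuity step is the only real technical point; the algebraic cancellation itself is exact and specific to two dimensions.
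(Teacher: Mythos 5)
Your proof is correct, and it follows a genuinely different decomposition from the paper's. Both arguments rest on the two-dimensional identity $\Delta u=\nabla^\perp\omega$, but you split the nonlinearity pointwise via the Lamb form $u\cdot\nabla u=\tfrac12\nabla|u|^2+\omega\,u^\perp$ and kill the two pieces separately: the gradient part pairs to zero against the divergence-free field $\nabla^\perp\omega$, and the remaining part becomes $\tfrac12\int_{\R^2} u\cdot\nabla(\omega^2)\,\mathrm{d}x=0$. The paper instead integrates by parts first, writing the integral as $-\int_{\R^2}\omega\,\curl\!\big((u\cdot\nabla)u\big)\,\mathrm{d}x$, and then invokes the vorticity-transport identity for incompressible planar flow to reach the same final cancellation $\tfrac12\int_{\R^2} u\cdot\nabla(\omega^2)\,\mathrm{d}x=0$; a minor dividend of your route is that it bypasses the curl-of-convection identity altogether (in the paper that identity is stated with the opposite sign, $\curl((u\cdot\nabla)u)=-(u\cdot\nabla)\omega$ instead of $+(u\cdot\nabla)\omega$, which is harmless there since the resulting term vanishes anyway). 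The treatments of the unbounded domain also differ: the paper works directly with the given $u\in\tilde H^2$, inserting a cutoff $\chi_R$ and using $\omega\in L^2$, $u\in L^\infty$ to show the error is $O(R^{-1})$, whereas you prove the identity on a dense class of smooth, decaying divergence-free fields and extend by continuity of $u\mapsto\int_{\R^2}(u\cdot\nabla u)\cdot\Delta u\,\mathrm{d}x$ on $\tilde H^2$ (which is indeed continuous, by $H^2\hookrightarrow L^\infty$ and multilinearity). Your continuity step is sound; just be aware that the density of smooth divergence-free fields in $\tilde H^2(\R^2)$ is itself a standard but not entirely free ingredient (in two dimensions it is most cleanly obtained through the stream function $u=\nabla^\perp\psi$, truncating and mollifying $\psi$), and the paper's cutoff argument avoids needing it.
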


\begin{proof}
Using that $\Delta u=\nabla^\perp\omega$ in two dimensions, we have
$$
\int_{\R^2} (u\cdot\nabla u)\cdot \Delta u\,\mathrm{d}x
= \int_{\R^2} (u\cdot\nabla u)\cdot \nabla^\perp \omega\,\mathrm{d}x
= -\int_{\R^2} \omega\,\curl\!\big((u\cdot\nabla)u\big)\,\mathrm{d}x .
$$
For incompressible planar flows, $\curl((u\cdot\nabla)u)=-(u\cdot\nabla)\omega$, hence
$$
-\int_{\R^2} \omega\,\curl\!\big((u\cdot\nabla)u\big)\,\mathrm{d}x
= \int_{\R^2} \omega\,(u\cdot\nabla)\omega\,\mathrm{d}x
= \tfrac12\int_{\R^2} u\cdot\nabla(\omega^2)\,\mathrm{d}x.
$$
Let $\chi_R\colon \R^2\to \R$ be a smooth cutoff function supported in $B_{2R}$, satisfying that
$\chi_R\equiv1$ on $B_R$ and $|\nabla\chi_R|\lesssim R^{-1}$. Then
\begin{equation}
\begin{split}
\int_{\R^2} u\cdot\nabla(\omega^2)\,\mathrm{d}x
&=\lim_{R\to\infty}\int_{\R^2} \chi_R\,u\cdot\nabla(\omega^2)\,\mathrm{d}x\\
&= -\lim_{R\to\infty}\int_{\R^2} \omega^2\,\mathrm{div}(\chi_R u)\,\mathrm{d}x\\
&= -\lim_{R\to\infty}\int_{\R^2} \omega^2\,u\cdot\nabla\chi_R\,\mathrm{d}x,
\end{split}
\end{equation}
where the last equality follows from $\dive u=0$.
Since $\omega\in L^2(\R^2)$ and $u\in\tilde H^2\subset L^\infty(\R^2)$, we deduce that
$$
\Big|\int_{\R^2} \omega^2\,u\cdot\nabla\chi_R\,\mathrm{d}x\Big|
\lesssim \frac{1}{R}\,\|\omega\|_{L^2}^2\,\|u\|_{L^\infty}
\longrightarrow 0 \quad \text{as } R\to\infty.
$$
This completes the proof.
\end{proof}

The existence of martingale solutions and pathwise uniqueness for \eqref{eq:main} with $u_0\in H^1(\R^2)$ and additive noise $\sigma\in L_2(\ell^2,\tilde H^1)$ can be established by adapting the methods of \cite{LiangZhangZhu2021}. In particular, a unique global probabilistically strong solution exists. The $H^1$-moment estimates follow from It\^o's formula on Galerkin approximations, the cancellation \eqref{eq:cancel}, and the Burkholder--Davis--Gundy and Young inequalities, after which one passes to the limit. 
We collect these standard results as follows:

\begin{thm}[Well-posedness and $H^1$-energy estimates]\label{thm:wp}
Let $u_0\in \tilde H^1$ and $\sigma\in L_2(\ell^2, \tilde H^1)$.
Then \eqref{eq:main} admits a unique global probabilistically strong solution
$$
u(\cdot;u_0)\in C\big([0,\infty); H^1_{\mathrm w}\big)\cap L^2_{\mathrm{loc}}(0,\infty; \tilde H^1),
$$
where $H^1_{\mathrm w}$ denotes $H^1$ equipped with the weak topology.
Pathwise uniqueness implies that for each $u_0\in\tilde H^1$ the solution map
$u(t;u_0)$ defines a Markov process, and the corresponding transition operators
$$
(P_t\phi)(u_0):=\EE[\phi(u(t;u_0))],\qquad \phi\in B_b(\tilde H^1),
$$
form a Markov semigroup on $(\tilde H^1,\mathcal B(\tilde H^1))$.
Moreover, for every $T>0$ there exists $C_T>0$ such that
\begin{equation}\label{solution-H1-estimate}
\EE\!\Big(\sup_{t\in[0,T]}\|u(t)\|_{H^1}^{2}\Big)
\le C_{T}\Big(1+\|u_0\|_{H^1}^{2}+\|\sigma\|_{L_2(\ell^2,H^1)}^{2}\Big)
\end{equation}
and
\begin{equation}\label{solution-H1-dissipation}
\EE\!\Big(\int_0^T \|\partial_1\nabla u(s)\|_{L^2}^2\,\mathrm ds\Big)
\le C_{T}\Big(1+\|u_0\|_{H^1}^{2}+\|\sigma\|_{L_2(\ell^2,H^1)}^{2}\Big).
\end{equation}
\end{thm}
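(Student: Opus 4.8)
The plan is to run the standard Galerkin scheme combined with stochastic compactness, adapting the arguments of \cite{LiangZhangZhu2021} to accommodate the extra linear damping $\lambda u$. First I would project \eqref{eq:main} onto finite-dimensional divergence-free subspaces $H_n$ (via a spectral or Fourier truncation of the Leray-projected system), obtaining a finite-dimensional SDE with locally Lipschitz coefficients; this yields a unique local solution $u^n$, which the a priori bounds below promote to a global one. The damping contributes a coercive term $\lambda\|\cdot\|^2$ at both the $L^2$ and $H^1$ levels and leaves the structure of the nonlinearity untouched, so the truncated system inherits the same cancellation properties as the undamped one.

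The core of the energy estimates is It\^o's formula applied to $\|u^n\|_{H^1}^2 = \|u^n\|_{L^2}^2 + \|\nabla u^n\|_{L^2}^2$. Testing the equation against $u^n$ and against $-\Delta u^n$, the nonlinear contribution $(u^n\cdot\nabla u^n, u^n)$ vanishes by incompressibility, while $(u^n\cdot\nabla u^n,\Delta u^n)=0$ by Lemma~\ref{lem:cancel}. The anisotropic viscosity produces exactly the horizontal dissipation, since testing $-\partial_1^2 u^n$ against $u^n$ gives $\|\partial_1 u^n\|_{L^2}^2$ and against $-\Delta u^n$ gives $\int \partial_1^2 u^n\cdot\Delta u^n\,\mathrm{d}x = \|\partial_1\nabla u^n\|_{L^2}^2$, which is precisely the quantity in \eqref{solution-H1-dissipation}; the damping adds $\lambda\|u^n\|_{H^1}^2$ and the It\^o correction contributes $\|\sigma\|_{L_2(\ell^2,H^1)}^2$. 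Taking the supremum over $[0,T]$, estimating the martingale term by the Burkholder--Davis--Gundy inequality, and absorbing the resulting $\sup_t\|u^n\|_{H^1}$-factor via Young's inequality, I obtain \eqref{solution-H1-estimate} and \eqref{solution-H1-dissipation} uniformly in $n$. Because the cancellation removes the nonlinear term outright, no interpolation inequality is needed at this stage, which keeps the a priori estimates clean.

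With these uniform bounds, the laws of $(u^n)$ are tight in a suitable path space; Prokhorov's and Skorokhod's theorems, together with a localized Aubin--Lions argument (using compact embeddings on bounded subdomains, consistent with the weak $H^1$-topology appearing in the statement), yield a martingale solution after passing to the limit, with the energy estimates preserved by lower semicontinuity. Pathwise uniqueness is then established for two solutions $u,v$ driven by the same noise with the same initial datum: since the noise is additive it cancels, and the difference $w=u-v$ solves a random PDE with no forcing. Testing with $w$, the pressure and the transport term $u\cdot\nabla w$ drop out by $\dive w=0$, leaving only $(w\cdot\nabla v, w)$ to be controlled by the horizontal dissipation $\|\partial_1 w\|_{L^2}^2$. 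A Gr\"onwall argument, in which the time-integrability of $\|\nabla v\|_{L^2}$ supplied by \eqref{solution-H1-dissipation} enters as the integrable prefactor, then forces $w\equiv 0$. Finally, the Yamada--Watanabe theorem upgrades the martingale solution to a unique probabilistically strong solution, and pathwise uniqueness yields the flow property, hence the Markov property, so that $(P_t)$ is a well-defined Markov semigroup on $\tilde H^1$.

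The main obstacle is the pathwise-uniqueness step rather than the a priori estimates: with only horizontal dissipation available, the nonlinear difference $(w\cdot\nabla v, w)$ cannot be absorbed by the isotropic gradient, since the full Ladyzhenskaya inequality is unavailable. One must instead distribute the derivatives across the mixed norms $L_{\mathrm h}^p(L_{\mathrm v}^q)$ via the two-dimensional anisotropic Gagliardo--Nirenberg inequality, so that exactly the horizontal factor $\|\partial_1 w\|_{L^2}$ appears with a small coefficient (to be absorbed into the dissipation) while the remaining factor is matched against the time-integrable $H^1$-norm of $v$. Verifying that the borderline exponents in this anisotropic interpolation close the Gr\"onwall estimate is the delicate part of the argument.
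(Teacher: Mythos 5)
Your proposal is correct and takes essentially the same route the paper intends: the paper establishes Theorem~\ref{thm:wp} only by sketch, invoking the adaptation of the Galerkin arguments of \cite{LiangZhangZhu2021} with It\^o's formula, the cancellation \eqref{eq:cancel}, the Burkholder--Davis--Gundy and Young inequalities, and a passage to the limit --- exactly your plan, including the anisotropic Gagliardo--Nirenberg interpolation for the pathwise-uniqueness step. The only slip is that the integrable Gr\"onwall prefactor is the mixed quantity $\|\nabla v\|_{L^2}^{2/3}\|\partial_1\nabla v\|_{L^2}^{2/3}$ (whose time-integrability comes from \eqref{solution-H1-dissipation} together with the $L^\infty_t H^1$ bound) rather than $\|\nabla v\|_{L^2}$ itself, a point your final paragraph in fact already gets right.
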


\section{The Maslowski--Seidler Criterion and Existence of Invariant Measures}\label{existence}
For the sake of completeness, before turning to uniqueness, we use the 
Maslowski--Seidler Criterion to show the existence of the invariant measure.
By pathwise uniqueness and existence of probabilistically strong solutions, the solution defines a Markov process on $\tilde H^1$, and the associated transition semigroup $(P_t)_{t\ge0}$ is well defined.
In this section we prove the existence of the 
invariant measure, i.e., we prove the following theorem.
\begin{thm}
{\sl Let $\lambda>0$ and additive noise operator $\sigma\in L_2(\ell^2, \tilde H^1)$. Then
there exists (at least) an invariant measure with respect to the Markovian semigroup $(P_t)_{t\ge0}$.}
\end{thm}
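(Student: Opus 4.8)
The plan is to invoke the Maslowski--Seidler criterion, which substitutes for the compactness required in the Krylov--Bogolyubov approach (unavailable here, since the embedding $\tilde H^1\hookrightarrow H$ is not compact on $\R^2$) a combination of a uniform-in-time moment bound with a sequential weak-Feller property. Concretely, it suffices to establish two facts: (i) for some $u_0\in\tilde H^1$ the time averages $\mu_T:=\frac1T\int_0^T P_t^*\delta_{u_0}\,\mathrm dt$ are tight on $\tilde H^1$ equipped with its weak topology; and (ii) the sequential weak-Feller property, namely that whenever $u_0^n\rightharpoonup u_0$ weakly in $\tilde H^1$ and $\phi$ is bounded and sequentially weakly continuous, then $(P_t\phi)(u_0^n)\to(P_t\phi)(u_0)$. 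Granting both, any weak limit point $\mu$ of $\{\mu_T\}$ is an invariant measure, because $P_t\phi$ stays in the test class and the Cesàro boundary terms vanish as $T\to\infty$.

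For (i), the crucial input is a \emph{time-uniform} $H^1$ estimate, in which the damping plays the role a Poincar\'e inequality would play on a bounded domain. Applying It\^o's formula to $\|u(t)\|_{H^1}^2$ along the Galerkin approximations and passing to the limit as in Theorem~\ref{thm:wp}, the nonlinear contribution $(u\cdot\nabla u,-\Delta u)$ vanishes by the cancellation \eqref{eq:cancel}, the anisotropic viscosity produces the nonnegative dissipation $2\|\partial_1 u\|_{L^2}^2+2\|\partial_1\nabla u\|_{L^2}^2$, the damping yields the coercive term $2\lambda\|u\|_{H^1}^2$, and the noise contributes the trace term $\|\sigma\|_{L_2(\ell^2,\tilde H^1)}^2$ together with a mean-zero martingale. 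Discarding the dissipation and taking expectations gives
$$
\frac{\mathrm d}{\mathrm dt}\,\EE\|u(t)\|_{H^1}^2+2\lambda\,\EE\|u(t)\|_{H^1}^2\le\|\sigma\|_{L_2(\ell^2,\tilde H^1)}^2,
$$
so Gr\"onwall's inequality produces the bound
$$
\sup_{t\ge0}\EE\|u(t;u_0)\|_{H^1}^2\le\|u_0\|_{H^1}^2+\frac{\|\sigma\|_{L_2(\ell^2,\tilde H^1)}^2}{2\lambda}=:C.
$$
Consequently $\int_{\tilde H^1}\|v\|_{H^1}^2\,\mu_T(\mathrm dv)\le C$ uniformly in $T$, and Chebyshev's inequality gives $\mu_T(\{\|v\|_{H^1}>R\})\le C/R^2$. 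Since closed balls of the separable Hilbert space $\tilde H^1$ are weakly compact and metrizable in the weak topology, the family $\{\mu_T\}$ is tight on $(\tilde H^1,\mathrm{weak})$, which establishes (i).

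It remains to verify the sequential weak-Feller property (ii), which I expect to be the main obstacle, precisely because the whole-space setting offers no compact Sobolev embedding to pass to the limit in the nonlinearity. The plan is to fix $u_0^n\rightharpoonup u_0$ in $\tilde H^1$ and analyze the laws of the solutions $u(\cdot;u_0^n)$. The estimates \eqref{solution-H1-estimate}--\eqref{solution-H1-dissipation}, whose constants depend only on $\sup_n\|u_0^n\|_{H^1}<\infty$, yield tightness of these laws in a suitable path space; one then extracts a convergent subsequence, using the local compact embeddings $H^1(B_R)\hookrightarrow L^2(B_R)$ to pass to the limit in the nonlinear term on bounded subdomains and the anisotropic dissipation bound to control the spatial tails, identifies the limit as the solution issued from $u_0$ by the pathwise uniqueness of Theorem~\ref{thm:wp}, and finally invokes the boundedness and sequential weak continuity of $\phi$ together with dominated convergence to obtain $(P_t\phi)(u_0^n)\to(P_t\phi)(u_0)$. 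With (i) and (ii) in hand, the Maslowski--Seidler theorem yields an invariant measure for $(P_t)_{t\ge0}$, completing the proof.
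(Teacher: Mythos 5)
Your proposal is correct and follows essentially the same route as the paper: the Maslowski--Seidler criterion, with the tail/tightness condition obtained from It\^o's formula for the $L^2$ and $H^1$ energies (using the cancellation \eqref{eq:cancel} and the damping term in place of a Poincar\'e inequality) plus Gr\"onwall, and the sequential weak-Feller property obtained from tightness of the laws in a path space with local-in-space compactness, Skorokhod representation, identification of the limit as a martingale solution, and pathwise uniqueness. The only differences are cosmetic: you prove a time-uniform moment bound where the paper only needs a time-averaged one, and you phrase the Maslowski--Seidler hypothesis as weak tightness of the Ces\`aro averages rather than as the tail bound \eqref{eq:MS}, which are equivalent here since closed balls of $\tilde H^1$ are weakly compact.
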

Note that the statement of the existence theorem  does not depend on  the value of $\lambda$ as long as
it is strictly positive. 
\textbf{Hence throughout  Section \ref{existence} we consider  $\lambda$ as a fixed constant and  denote by $C$ a constant which may depend on $\lambda$ and these constants may differ from line to line.}

Follow a similar strategy as in   
\cite{BrzezniakMotylOndrejat2017}, we prove the existence of the invariant measure via Maslowski--Seidler Criterion, 
see, e.g.,  \cite{MaslowskiSeidler1999}.

For all separable Hilbert space $\mathcal{H}$, denote by $\mathcal{H}_w$ the space $\mathcal{H}$  endowed with weak topology. Moreover,  we denote by $\mathscr{C}_b(\mathcal{H}_w)$ the space of bounded continuous functions on $\mathcal{H}_w$ (i.e., bounded weakly continuous functions on $\mathcal{H}$)  and by 
$\mathscr{S}_b(\mathcal{H}_w)$ the space of bounded sequentially weakly continuous functions on $\mathcal{H}$.
Obviously we have $\mathscr{C}_b(\mathcal{H}_w)\subseteq \mathscr{S}_b(\mathcal{H}_w)$.

Let us recall  the following proposition from Proposition 3.1 of \cite{MaslowskiSeidler1999}:
\begin{prop}\label{prop:MS}
Suppose that the semigroup $(P_t)$ is sequentially weakly Feller, that is,
$$
P_t\big(\mathscr{C}_b(\mathcal{H}_w)\big) \subset \mathscr{S}_b(\mathcal{H}_w).
$$
Assume that we can find a Borel probability measure $\nu$ on $\mathcal{H}$ and
$T_0 \ge 0$ such that for any $\varepsilon > 0$ there exists $R > 0$ satisfying
\begin{equation}
\label{eq:MS}
\sup_{T \ge T_0} \frac{1}{T} \int_0^T
P_t^{\ast}\nu\big(\{\|x\|_{\mathcal{H}} > R\}\big)\, dt < \varepsilon .
\end{equation}
Then there exists an invariant probability measure for $(P_t)$.
\end{prop}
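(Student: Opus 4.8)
The plan is to run a Krylov--Bogoliubov argument adapted to the weak topology. First I would introduce the time-averaged measures
$$
\mu_T := \frac{1}{T}\int_0^T P_t^{\ast}\nu\,\mathrm dt, \qquad T \ge T_0,
$$
which are Borel probability measures on $\mathcal H$. The hypothesis \eqref{eq:MS} says exactly that for every $\varepsilon>0$ there is $R>0$ with $\mu_T(\{\|x\|_{\mathcal H}>R\})<\varepsilon$ for all $T\ge T_0$. Since $\mathcal H$ is a separable Hilbert space, every closed ball $K_R:=\{\|x\|_{\mathcal H}\le R\}$ is weakly compact and, crucially, metrizable for the weak topology. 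Thus \eqref{eq:MS} is a genuine tightness statement: the family $\{\mu_T\}_{T\ge T_0}$ is concentrated, up to mass $\varepsilon$, on the weakly compact metrizable sets $K_R$.

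Next I would extract a limit. By Prokhorov's theorem applied on the metrizable weakly compact balls (equivalently, by a diagonal argument over an increasing sequence $R_k\uparrow\infty$), there is a sequence $T_n\to\infty$ and a Borel probability measure $\mu$ on $\mathcal H$ such that $\mu_{T_n}\rightharpoonup\mu$ in the sense that $\int f\,\mathrm d\mu_{T_n}\to\int f\,\mathrm d\mu$ for every $f\in\mathscr C_b(\mathcal H_w)$. Since the norm is weakly lower semicontinuous, each $K_R$ is weakly closed, so the Portmanteau theorem together with \eqref{eq:MS} gives $\mu(K_R^c)\le\varepsilon$; that is, $\mu$ inherits the same tightness and is a genuine probability measure, no mass having escaped to infinity.

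The heart of the argument is to verify that $\mu$ is invariant. For fixed $s\ge0$ and $f\in\mathscr C_b(\mathcal H_w)$, the semigroup identity $\int P_sf\,\mathrm d(P_t^{\ast}\nu)=\int P_{t+s}f\,\mathrm d\nu=g(t+s)$, where $g(r):=\int P_rf\,\mathrm d\nu$ satisfies $|g|\le\|f\|_\infty$, together with a change of variables yields
$$
\Big|\int_{\mathcal H} P_sf\,\mathrm d\mu_T-\int_{\mathcal H} f\,\mathrm d\mu_T\Big|=\frac{1}{T}\Big|\int_T^{T+s}\!\!\! g(r)\,\mathrm dr-\int_0^s\!\! g(r)\,\mathrm dr\Big|\le\frac{2s\|f\|_\infty}{T}.
$$
Letting $T=T_n\to\infty$ makes the right-hand side vanish, so it remains to pass to the limit in each term. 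For $\int f\,\mathrm d\mu_{T_n}$ this is immediate since $f\in\mathscr C_b(\mathcal H_w)$. For $\int P_sf\,\mathrm d\mu_{T_n}$, however, the sequentially weakly Feller hypothesis only provides $P_sf\in\mathscr S_b(\mathcal H_w)$, so the convergence must be upgraded from weakly continuous to merely sequentially weakly continuous test functions.

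I expect this upgrade to be the main obstacle, since the weak topology on $\mathcal H$ is not metrizable globally and Prokhorov convergence is naturally stated against genuinely weakly continuous functions. The resolution exploits the tightness established above. Given $\delta>0$, choose $R$ with $\sup_n\mu_{T_n}(K_R^c)<\delta$ and $\mu(K_R^c)<\delta$. On the weakly compact \emph{metrizable} set $K_R$, sequential weak continuity coincides with topological weak continuity, so $P_sf|_{K_R}$ is genuinely weakly continuous; extending it by Tietze's theorem (using a metric compatible with the weak topology on bounded sets, for which metric-continuity forces weak continuity) to a function $\widetilde f\in\mathscr C_b(\mathcal H_w)$ that agrees with $P_sf$ on $K_R$ and satisfies $\|\widetilde f\|_\infty\le\|f\|_\infty$, one controls the discrepancy $P_sf-\widetilde f$, supported in $K_R^c$, by $2\|f\|_\infty\delta$ under both $\mu_{T_n}$ and $\mu$. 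Applying $\mu_{T_n}\rightharpoonup\mu$ to $\widetilde f$ and letting $\delta\to0$ gives $\int P_sf\,\mathrm d\mu_{T_n}\to\int P_sf\,\mathrm d\mu$. Combining the three limits yields $\int P_sf\,\mathrm d\mu=\int f\,\mathrm d\mu$ for all $f\in\mathscr C_b(\mathcal H_w)$ and all $s\ge0$; since bounded weakly continuous functions separate Borel probability measures on the separable space $\mathcal H$, this forces $P_s^{\ast}\mu=\mu$, so $\mu$ is the desired invariant probability measure.
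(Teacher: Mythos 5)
Your argument is correct in substance, but note that the paper does not prove this proposition at all: it is quoted directly as Proposition~3.1 of Maslowski and Seidler \cite{MaslowskiSeidler1999}, so the honest comparison is with that original proof --- and your reconstruction follows essentially the same route as the known argument. You use the Krylov--Bogoliubov averages $\mu_T=\frac1T\int_0^T P_t^{\ast}\nu\,\mathrm dt$, read \eqref{eq:MS} as tightness with respect to the weakly compact, weakly metrizable balls $K_R$ of the separable space $\mathcal H$, extract a limit along $T_n\to\infty$, prove approximate invariance with the $2s\|f\|_\infty/T$ estimate, and --- this is the genuinely nontrivial point, and you correctly identified it --- upgrade the convergence from test functions in $\mathscr{C}_b(\mathcal{H}_w)$ to the merely sequentially weakly continuous function $P_sf\in\mathscr{S}_b(\mathcal{H}_w)$ by exploiting that on a weakly compact metrizable ball sequential weak continuity coincides with weak continuity, and controlling the complement by tightness. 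This is exactly the mechanism that makes the sequential weak Feller hypothesis sufficient, and your final reduction is also sound, since functions of finitely many inner products (e.g.\ trigonometric functions of $\langle x,a\rangle$) lie in $\mathscr{C}_b(\mathcal{H}_w)$ and determine the characteristic functional, hence the measure.

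Two justifications deserve repair, though both are routine. First, your parenthetical defense of the Tietze step is off: a metric inducing the weak topology on bounded sets does not extend to a compatible metric on all of $\mathcal H$, so metric continuity of an extension does not by itself give weak continuity globally. The correct statement is that a compact subset of a completely regular space such as $\mathcal H_w$ is $C^{\ast}$-embedded: extend $P_sf|_{K_R}$ by Tietze in the compact Hausdorff (hence normal) space $\beta(\mathcal H_w)$ and restrict back, preserving the sup-norm bound. Second, in the diagonal extraction over $R_k\uparrow\infty$, restriction of measures to balls is not weakly continuous, so the restricted limits must be checked to be consistent --- for instance by choosing the radii so that the limit measures do not charge the spheres $\partial K_{R_k}$, or by invoking the Prokhorov theorem for uniformly tight families of Radon measures on Tychonoff spaces (see Bogachev \cite[Vol.~II]{Bogachev2007-II}). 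With these repairs your proof is complete and matches the cited one.
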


We will use the above proposition to prove the existence of the invariant measure.
We divide the proof into two parts, which are exactly the following two subsections.

\subsection{The proof of sequentially weakly Feller}\label{sub41} 
First we will show $P_t\big(\mathscr{C}_b(\tilde H^1_w)\big) \subset \mathscr{S}_b(\tilde H^1_w)$.  
It is equivalent to show that for all $T>0$, it holds that 
$P_t\big(\mathscr{C}_b(\tilde H^1_w)\big) \subset \mathscr{S}_b(\tilde H^1_w)$ for all $t\in [0,T]$.
\textbf{From now on in  subsection \ref{sub41} we fix $T>0$ and  all the  constants $C$ in  subsection \ref{sub41} are allowed to depend on $T$ without  mentioning.}
Note that it suffices to show that for all $\varphi\in \mathscr{C}_b(\tilde H^1_w)$,
 $P_t\varphi\in  \mathscr{S}_b(\tilde H^1_w)$ for all $t\in [0,T]$.
 Let $u_{0n}\in \tilde H^1_w $ be any sequence which satisfies that  
 $u_{0n}\rightarrow u_0$ in $\tilde H^1_w $, hence 
 $u_{0n}$, $n\in \N$ are uniformly bounded in $H^1$.
 Then we immediately have $\varphi(u_{0n})\rightarrow \varphi(u_0)$.
 Denote by $u_n$ and $u$ the probabilistically strong solutions of  
 \eqref{eq:main} with initial value $u_{0n}$ and $u_0$, respectively.

 Let $U$
be  another separable  Hilbert space which is densely and  compactly embedded in 
$\tilde H^1$ (see, e.g.,  \cite{BRZEZNIAK20131627} Appendix Lemma C.1 for the proof of the existence of such $U$),
hence $H^{-1}$  is compactly (hence continuously)  embedded in $U'$.
 Define the function space
$$\mathcal{Z}_{T}:=C([0,T];U')\cap L^2([0,T];L^2_{loc})\cap C([0,T];\tilde H^1_{w}).$$
Note that the space $\mathcal Z_T$ is endowed with the topology given by the intersection of the three topologies.
Let $P^n$ be the probability distribution of $u_n$ in 
$\mathcal{Z}_{T}$. Then we have the following proposition.

\begin{lem}
	$P^n$ is tight in $\mathcal{Z}_{T}$.
\end{lem}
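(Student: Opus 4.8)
The plan is to apply the now-standard tightness criterion for path spaces of the type $\mathcal{Z}_T$ (see \cite{BRZEZNIAK20131627,BrzezniakMotylOndrejat2017}), which reduces tightness of $\{P^n\}$ to two ingredients: a uniform energy bound in the strong norms, and an Aldous-type stochastic equicontinuity estimate in the weak space $U'$. Concretely, I would verify (i) $\sup_n \EE\big[\sup_{t\in[0,T]}\|u_n(t)\|_{H^1}^2\big] < \infty$ together with $\sup_n \EE\int_0^T \|\partial_1\nabla u_n\|_{L^2}^2\,\mathrm ds <\infty$, and (ii) the Aldous condition: for every $\eps,\eta>0$ there is $\delta>0$ with $\sup_n \sup_{\theta\le\delta}\PP\big(\|u_n((\tau+\theta)\wedge T)-u_n(\tau)\|_{U'}\ge\eta\big)\le\eps$ uniformly over $(\mathcal F_t)$-stopping times $\tau\le T$. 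These two facts, together with the compact embeddings $\tilde H^1\hookrightarrow\hookrightarrow L^2_{\mathrm{loc}}$ and $H^{-1}\hookrightarrow\hookrightarrow U'$, yield relative compactness of the laws in each of the three topologies defining $\mathcal{Z}_T$, hence tightness in the intersection.

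For the energy bound, since $u_{0n}\to u_0$ in $\tilde H^1_w$ the sequence $\{u_{0n}\}$ is bounded in $H^1$; applying the $H^1$-moment estimates \eqref{solution-H1-estimate}--\eqref{solution-H1-dissipation} of Theorem~\ref{thm:wp} with these data and the fixed $\sigma\in L_2(\ell^2,\tilde H^1)$ gives (i) with a constant independent of $n$. I stress that only the anisotropic dissipation $\partial_1\nabla u$ is controlled in $L^2_tL^2_x$; this is immaterial for tightness in $L^2([0,T];L^2_{\mathrm{loc}})$, since the required spatial compactness is already supplied by the uniform $L^\infty(0,T;H^1)$ bound through the local Rellich embedding $H^1(B_R)\hookrightarrow\hookrightarrow L^2(B_R)$.

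To check (ii), I would apply the Leray--Helmholtz projection $\Pi$ to \eqref{eq:main}, eliminating the pressure, and write $u_n(t)=u_{0n}+\int_0^t \Pi\big[\partial_1^2 u_n - u_n\cdot\nabla u_n-\lambda u_n\big]\,\mathrm ds + \int_0^t \Pi\sigma\,\mathrm dW$. For a stopping time $\tau$ and increment $\theta$ I estimate the drift and stochastic increments in $H^{-1}$ separately. The linear terms obey $\|\partial_1^2 u_n\|_{H^{-1}}\lesssim\|u_n\|_{H^1}$ and $\|\lambda u_n\|_{H^{-1}}\lesssim\|u_n\|_{L^2}$, while the nonlinearity, using $\Pi(u\cdot\nabla u)=\Pi\dive(u\otimes u)$ and the planar interpolation $\|u\|_{L^4}^2\lesssim\|u\|_{L^2}\|\nabla u\|_{L^2}$, satisfies $\|\Pi(u_n\cdot\nabla u_n)\|_{H^{-1}}\lesssim\|u_n\otimes u_n\|_{L^2}\lesssim\|u_n\|_{H^1}^2$; hence the drift increment is bounded by $\EE\big\|\int_\tau^{\tau+\theta}\Pi[\cdots]\,\mathrm ds\big\|_{H^{-1}}\lesssim \theta\,\EE\sup_{[0,T]}(1+\|u_n\|_{H^1}^2)\lesssim\theta$. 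For the stochastic term the It\^o isometry gives $\EE\big\|\int_\tau^{\tau+\theta}\Pi\sigma\,\mathrm dW\big\|_{H^{-1}}^2\le\theta\,\|\sigma\|_{L_2(\ell^2,H^{-1})}^2\lesssim\theta\,\|\sigma\|_{L_2(\ell^2,\tilde H^1)}^2$. Combining these with $H^{-1}\hookrightarrow U'$ and Chebyshev's inequality yields $\sup_n\PP\big(\|u_n(\tau+\theta)-u_n(\tau)\|_{U'}\ge\eta\big)\lesssim \eta^{-1}\theta^{1/2}$, which is exactly (ii).

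With (i) and (ii) in hand, the cited criterion produces, for each $\eps>0$, a set $K_\eps\subset\mathcal{Z}_T$ that is relatively compact in all three topologies simultaneously and satisfies $\inf_n P^n(K_\eps)\ge 1-\eps$, giving tightness. The one point requiring genuine care is the anisotropy: the equation provides full-derivative dissipation only in the horizontal direction, so one cannot invoke an $L^2(0,T;\tilde H^2)$-type bound to gain compactness jointly in time and space. The argument circumvents this by using the uniform $L^\infty(0,T;H^1)$ control for spatial compactness and the weak-in-$U'$ equicontinuity for temporal compactness, so that the anisotropic structure of the dissipation never actually enters the tightness proof. I expect this decoupling—rather than any single estimate—to be the conceptual crux of the lemma.
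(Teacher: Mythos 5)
Your proposal is correct, and it proves the lemma by a genuinely different (though closely related) route than the paper. You follow the scheme of \cite{BRZEZNIAK20131627,BrzezniakMotylOndrejat2017} in its ``stochastic'' form: uniform energy moments plus the Aldous condition in $U'$, fed into the tightness criterion as a black box. The paper instead works with explicit compact sets: it defines $K_R=\{u:\ \sup_{t\le T}\|u(t)\|_{H^1}+\|u\|_{C^{1/4}([0,T];U')}\le R\}$, which is relatively compact in $\mathcal Z_T$ by Lemma~3.3 of \cite{BRZEZNIAK20131627}, and then bounds $P^n(K_R)$ from below. For that, the drift increment is estimated \emph{pathwise} by Cauchy--Schwarz on the event $\{\sup_t\|u_n\|_{H^1}\le R_0\}$, giving a deterministic $C^{1/2}$ (hence $C^{1/4}$) H\"older bound of order $R_0^2$, while the martingale part is handled through BDG-type moment bounds and Kolmogorov's continuity criterion (with $p=4$, $\alpha=1/4$) followed by Chebyshev. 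Your Aldous-condition argument replaces this Kolmogorov/H\"older machinery by first-moment bounds on increments at stopping times (It\^o isometry for the stochastic integral), which is technically lighter; the price is that you must know the criterion applies to the present $\mathcal Z_T=C([0,T];U')\cap L^2([0,T];L^2_{loc})\cap C([0,T];\tilde H^1_{\mathrm w})$, where the strong component's compactness is supplied by the $L^\infty_t H^1$ bound rather than an $L^2_t V$ dissipation bound --- a point you correctly identify and resolve via the local Rellich embedding, exactly as the paper's use of the deterministic compactness lemma does implicitly. The core $H^{-1}$ estimates are identical in both proofs ($\|\partial_1^2 u\|_{H^{-1}}\lesssim\|u\|_{H^1}$, $\|\dive(u\otimes u)\|_{H^{-1}}\lesssim\|u\|_{H^1}^2$ via the Sobolev embedding $H^1\hookrightarrow L^4$), and both proofs bypass the anisotropy in the same way: only the uniform $H^1$ bound, never the dissipation $\partial_1\nabla u$, enters the tightness argument. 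One cosmetic remark: your final Chebyshev bound mixes a first-moment estimate ($\eta^{-1}\theta$ for the drift) with a second-moment one ($\eta^{-2}\theta$ for the noise), so the displayed rate $\eta^{-1}\theta^{1/2}$ is not literally what comes out, but this is immaterial since the Aldous condition only requires the probability to vanish as $\theta\to0$ uniformly in $n$ for each fixed $\eta$.
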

 \begin{proof}
 For all $R>0$,
 define $K_{R}=\{u\in \mathcal{Z}_{T}: \sup\limits_{0\leq t\leq T}\|u(t)\|_{H^1}
+\|u\|_ {C^{\frac{1}{4}}([0,T];U')}\leq R\}$,
where $C^{\frac{1}{4}}([0,T];U')$ is the H\"{o}lder (semi)norm
$$\|u\|_ {C^{\frac{1}{4}}([0,T];U')}:=\sup\limits_{s\neq t\in[0,T]}\frac{\|u(t)-u(s)\|_{U'}}{\mid t-s\mid^{\frac14}}.
$$
By Lemma 3.3 of \cite{BRZEZNIAK20131627}, $K_{R}$ is relatively compact in $\mathcal{Z}_{T}$. 
It remains to show that $\forall \epsilon>0$, there exists a large enough $R$, such that for all $n\in \N$,
$P^n(K_{R})>1-\epsilon$.
First by \eqref{solution-H1-estimate} and the fact that 
$u_{0n}$, $n\in \N$ are uniformly bounded in $H^1$,
we can find a large enough constant $R_0\in (1,\infty)$, such that for all $n\in \N$, 
 \begin{equation}\label{prob:R0}
 P^n(\hat{K}_{R_0})>1-\frac{\epsilon}{2},
 \end{equation}
 where $\hat{K}_{R_0}=\{u\in \mathcal{Z}_{T}: \sup\limits_{0\leq t\leq T}\|u(t)\|_{H^1}
\leq R_0\}  $.
Observe that by Theorem \ref{thm:wp},
\begin{equation}\label{un}
u_n(t)=u_{0n}+\int_{0}^{t}\mathcal{P}(-u_n\cdot\nabla u_n + \partial_1^2 u_n - \lambda u_n)ds+M(t),
\end{equation}
where $\mathcal{P}$ is the Leray projector and $M(t)=\int_{0}^{t} \mathcal{P}\sigma dW=\int_{0}^{t} \sigma dW $.\\
By Cauchy--Schwarz inequality, we have for all $u\in \mathcal{Z}_T$,
\begin{equation*}
\begin{split}
&\sup\limits_{s\neq t\in[0,T]}\biggl(\frac{\| \int_{s}^{t}\mathcal{P}\bigl(-\partial_{1}^{2}u+\dive(u\otimes u)-\lambda u\bigr)dr\|_{H^{-1}}^{2}}{\mid t-s \mid}\biggr)1_{u\in \hat{K}_{R_0}}\\
&\leq \int_{0}^{T}\|\mathcal{P}\bigl(-\partial_{1}^{2}u+\dive(u\otimes u)-\lambda u\bigr)\|_{H^{-1}}^{2}\,dr1_{u\in \hat{K}_{R_0}}\\
&\leq \int_{0}^{T}\|-\partial_{1}^{2}u+\dive(u\otimes u)-\lambda u\|_{H^{-1}}^{2}\,dr1_{u\in \hat{K}_{R_0}},
\end{split}
\end{equation*}
where the first inequality is due to
\begin{equation*}
\begin{split}
&\Big\| \int_{s}^{t}\mathcal{P}\bigl(-\partial_{1}^{2}u+\dive(u\otimes u)-\lambda u\bigr)dr\Big\|_{H^{-1}}^{2}\\
&\leq |t-s|\int_s^t\|\mathcal{P}\bigl(-\partial_{1}^{2}u+\dive(u\otimes u)-\lambda u\bigr)\|_{H^{-1}}^{2}\,dr
\end{split}
\end{equation*}
and the second inequality follows from the boundedness of the Leray projector.
Hence
\begin{equation}\label{S4eq18}
\begin{split}
&\sup\limits_{s\neq t\in[0,T]}\biggl(\frac{\| \int_{s}^{t}-\partial_{1}^{2}u+\dive(u\otimes u)-\lambda u\,dr\|_{H^{-1}}^{2}}{\mid t-s \mid}\biggr)1_{u\in \hat{K}_{R_0}}\\
&\leq 3\int_{0}^{T}\|-\partial_{1}^{2}u\|_{H^{-1}}^{2}+\|\dive(u\otimes u)\|_{H^{-1}}^{2}+\lambda^2 \|u\|_{H^{-1}}^2\,dr1_{u\in \hat{K}_{R_0}}\\
&\leq 3\int_{0}^{T}(1+\lambda^2)\|u\|_{H^{1}}^{2}+\|u\otimes u\|_{L^2}^{2}\,dr1_{u\in \hat{K}_{R_0}}\\
&\leq 3\int_{0}^{T}(1+\lambda^2)\|u\|_{H^{1}}^{2}+\| u\|_{L^4}^{4}\,dr1_{u\in \hat{K}_{R_0}}\\
&\leq C\int_{0}^{T}\|u\|_{H^{1}}^2+\|u\|_{H^{1}}^{4}\,dr1_{u\in \hat{K}_{R_0}},
\end{split}
\end{equation}
where the last inequality is due to the Sobolev embedding
$H^1$ to $L^4$.
 Therefore, for all $u\in \hat{K}_{R_0}$, we obtain  that 
 $$\sup\limits_{s\neq t\in[0,T]}\biggl(\frac{\| \int_{s}^{t}-\partial_{1}^{2}u+\dive(u\otimes u)dr-\lambda u\,dr\|_{H^{-1}}^{2}}{\mid t-s \mid}\biggr)\leq CR_0^4,
 $$
 where $C$ is a constant which does not depend on $u$ and $R_0$.
 Hence we immediately have for all $u\in \hat{K}_{R_0}$,
  \begin{equation*}
  \sup\limits_{s\neq t\in[0,T]}\biggl(\frac{\| \int_{s}^{t}-\partial_{1}^{2}u+\dive(u\otimes u)-\lambda u\,dr\|_{H^{-1}}}{| t-s|^{\frac12}}\biggr)\leq (CR_0^4)^{\frac12}.
 \end{equation*}
Therefore, there exists a constant $C$, such that 
for all $u\in \hat{K}_{R_0}$,
\begin{equation}
  \sup\limits_{s\neq t\in[0,T]}\biggl(\frac{\| \int_{s}^{t}-\partial_{1}^{2}u+\dive(u\otimes u)-\lambda u\,dr\|_{H^{-1}}}{| t-s|^{\frac14}}\biggr)\leq CR_0^2.
 \end{equation}
Since 
$H^{-1}$  is  continuously  embedded in $U'$,
we immediately know that
there exists a constant $C$, such that 
for all $u\in \hat{K}_{R_0}$,
\begin{equation}\label{drift:holder}
  \sup\limits_{s\neq t\in[0,T]}\biggl(\frac{\| \int_{s}^{t}-\partial_{1}^{2}u+\dive(u\otimes u)-\lambda u\,dr\|_{U'}}{| t-s|^{\frac14}}\biggr)\leq CR_0^2.
\end{equation}

 Next we consider the martingale term $M(t)=\int_{0}^{t}\sigma dW $.
 First observe that 
 for all $T\geq t>s\geq 0$ and $p\in \mathbb{N}$,
 \begin{equation*}
 \begin{split}
&\EE\|M(t)-M(s)\|_{H^{-1}}^{2p}\\
&\leq \EE\|M(t)-M(s)\|_{H^{1}}^{2p}\\
&\le C_p\,\EE\Big(\int_s^t \|\sigma\|_{L_2(\ell^2,H^{1})}^2\,dr\Big)^p\\
&
= C_p\,\|\sigma\|_{L_2(\ell^2,H^{1})}^{2p}\,|t-s|^{p},
\end{split}
 \end{equation*}
 where $C_p$ is a constant which only depends on $p$.
Hence by Kolmogorov's criterion (see, e.g., \cite[Chap.~I, Theorem 2.1]{RevuzYor1999}), for all $p\in \mathbb{N}$ and $\alpha \in (0,\frac{p-1}{2p})$, we deduce
 \begin{equation*}
 \mathbb{E}\Biggl[\Bigl(\sup\limits_{s\neq t\in[0,T]}\frac{\|M(t)-M(s)\|_{H^{-1}}}{\mid t-s\mid^{\alpha}}\Bigr)^{2p}\Biggr]\leq  C_{p,\alpha}\,\|\sigma\|_{L_2(\ell^2,H^{1})}^{2p}.
 \end{equation*}
 Choose $p=4$ and $\alpha=\frac14$. 
 Then we immediately obtain that there exists a constant $C$,
 such that 
 \begin{equation*}
 \mathbb{E}\Biggl[\Bigl(\sup\limits_{s\neq t\in[0,T]}\frac{\|M(t)-M(s)\|_{H^{-1}}}{\mid t-s\mid^{\frac14}}\Bigr)^{8}\Biggr]\leq  C\,\|\sigma\|_{L_2(\ell^2,H^{1})}^{8}.
 \end{equation*}
 By Chebyshev's inequality, we can find a constant $R_1$,
 such that 
$$\mathbb{P}\Bigl(\sup\limits_{s\neq t\in[0,T]}\frac{\|M(t)-M(s)\|_{H^{-1}}}{\mid t-s\mid^{\frac14}}>R_1\Bigr)<\frac{\epsilon}{2}.
$$
Since 
$H^{-1}$  is  continuously  embedded in $U'$,
we immediately know that
there exists a constant $C$, such that 
\begin{equation}\label{M:holder}
\mathbb{P}\Bigl(\|M\|_ {C^{\frac{1}{4}}([0,T];U')}>CR_1\Bigr)<\frac{\epsilon}{2}.
\end{equation}
For each $n\in\N$, define the event
$$
A_n:=\{\omega\in\Omega:\;u_n\in \hat{K}_{R_0},\|M\|_ {C^{\frac{1}{4}}([0,T];U')}\leq CR_1\},
$$
where $C$ is the constant from \eqref{M:holder}.
Combining \eqref{prob:R0} and \eqref{M:holder},
we deduce that $ \forall n\in\N$,
$$\mathbb{P}(A_n)>1-\epsilon.$$
Moreover, combining \eqref{un} and \eqref{drift:holder},
  we obtain that there exists a constant $C$ independent of $n$, $R_0$, and $R_1$, such that 
  on the event $A_n$,
\begin{equation}\label{holder:un}
\|u_n\|_{C^{\frac{1}{4}}([0,T];U')}
\leq C R_0^2 + CR_1,
\end{equation}
 Choosing 
 $$R=R_0+C R_0^2 + CR_1.$$
Then
for all $\omega\in A_n  $,
 $u_n(\omega)\in K_R$.
 Hence  for each $n\in \mathbb{N}$,
 $P^n(K_R)\geq \mathbb{P}(A_n)>1-\epsilon
 $
 \end{proof}

Then we apply the Skorokhod Theorem from  
 \cite[Theorem 2]{Jakubowski1998Short} and use the form from
 Theorem A.1 in \cite{brzezniak2013}. The verification 
 of $\mathcal{Z}_T$ satisfying  the assumptions of the theorem is the same as   in 
  \cite[Corollary 3.12]{BRZEZNIAK20131627}, hence we omit the details.
We follow the same strategy as in 
\cite[Theorem 4.11]{BrzezniakMotylOndrejat2017},
in other word, we identify the limit as a martingale solution which satisfies the similar weak formulation as in \cite{BrzezniakMotylOndrejat2017}. Let $\hat{K}$ be an auxiliary Hilbert space such that the embedding 
$\ell^2\subset  \hat{K}$ densely and continuously and the  embedding
$i\colon \ell^2\hookrightarrow \hat{K}$ is Hilbert--Schmidt.
 We identify $\ell^2$-cylindrical Wiener process 
  $W(\cdot)$ with its $Q$-Wiener realization in   $C([0,T];\hat{K})$, where $Q=ii^{\ast}$. From now on we denote this $\hat K$-valued process $i(W)$ again by $W$.
 Note that 
the product topological space 
$\mathcal{Z}_T\times C([0,T];\hat{K})$ also 
satisfies   the assumptions of \cite[Theorem 2]{Jakubowski1998Short}.
Moreover, the laws of $\{(u_n, W)\}_{n\in\N}$ are tight in $\mathcal{Z}_T\times C([0,T];\hat{K})$.
By \cite[Theorem 2]{Jakubowski1998Short}, we 
have the following proposition.

\begin{prop}\label{prop:skoro}
	There exists another  probability space
	$(\tilde{\Omega},\tilde{\mathcal{F}},\tilde{\mathbb{P}})$, a 
	 subsequence  $n_k$, 
	 and $\mathcal{Z}_T\times C([0,T];\hat{K})$-valued Borel measurable random variables  $(\tilde{u},\tilde W)$, $(\tilde{u}_k,\tilde{W}_k)$, $k\in\N$, 
	 such that
	 
\begin{enumerate}
	\item $\forall$ $k\in \N$, $(\tilde{u}_{k},\tilde{W}_k)$ has the same law as $(u_{n_k},W)$ on 
	$\mathcal{Z}_T\times C([0,T];\hat{K})$.
	\item  $(\tilde{u}_{k},\tilde{W}_k)$ converges  to   $(\tilde{u},\tilde W) $ $\tilde{\mathbb{P}}$-almost surely in $\mathcal{Z}_T\times C([0,T];\hat{K})$.
 \end{enumerate}
\end{prop}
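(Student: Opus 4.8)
The plan is to invoke the generalized Skorokhod representation theorem of Jakubowski \cite{Jakubowski1998Short}, in the form recorded as Theorem A.1 of \cite{brzezniak2013}. Unlike the classical Skorokhod theorem, this version applies to topological spaces that need not be metrizable but merely carry a countable family of continuous real-valued functions separating points. This generality is essential here, because the factor $C([0,T];\tilde H^1_{w})$ of $\mathcal{Z}_T$ carries the weak topology and is therefore not metrizable, so the classical representation theorem is unavailable.

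First I would assemble the two hypotheses of the theorem. The tightness of $\{P^n\}_{n\in\N}$ in $\mathcal{Z}_T$ was established in the previous lemma; since the law of $W$ on the Polish space $C([0,T];\hat K)$ is a single Radon measure, hence tight, and identical for every $n$, tightness of the joint laws of $\{(u_n,W)\}_{n\in\N}$ on the product space follows by taking products of the corresponding compact sets. This verifies the tightness requirement. For the structural requirement I would recall that $\mathcal{Z}_T$ admits a countable separating family of continuous functions, exactly as verified in \cite[Corollary 3.12]{BRZEZNIAK20131627}; the Polish factor $C([0,T];\hat K)$ trivially enjoys the same property, and the property is stable under finite products, so $\mathcal{Z}_T\times C([0,T];\hat K)$ satisfies the hypotheses of \cite[Theorem 2]{Jakubowski1998Short}.

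Applying the theorem then produces a probability space $(\tilde\Omega,\tilde{\mathcal F},\tilde{\mathbb P})$, a subsequence $(n_k)_{k\in\N}$, and $\mathcal{Z}_T\times C([0,T];\hat K)$-valued Borel random variables $(\tilde u_k,\tilde W_k)$ and $(\tilde u,\tilde W)$ such that each $(\tilde u_k,\tilde W_k)$ has the same law as $(u_{n_k},W)$ and $(\tilde u_k,\tilde W_k)\to(\tilde u,\tilde W)$ $\tilde{\mathbb P}$-almost surely in $\mathcal{Z}_T\times C([0,T];\hat K)$, which is precisely assertions (1) and (2). The main obstacle I anticipate is the non-metrizability of $\mathcal{Z}_T$, which forces the Jakubowski form of the theorem and makes the existence of a countable separating family on $\mathcal{Z}_T$ the genuine content of the argument; once this structural fact is in hand the conclusion is immediate. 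I emphasize that at this stage only equality of laws and almost sure convergence are asserted: identifying each $\tilde W_k$ and $\tilde W$ as a Wiener process relative to a suitable filtration on $\tilde\Omega$, and hence $\tilde u$ as a martingale solution of \eqref{eq:main}, is a separate step to be carried out afterwards, following the scheme of \cite[Theorem 4.11]{BrzezniakMotylOndrejat2017}.
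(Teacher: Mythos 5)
Your proposal is correct and follows essentially the same route as the paper: it invokes Jakubowski's Skorokhod representation theorem (in the form of Theorem A.1 of \cite{brzezniak2013}), verifies the structural hypothesis on $\mathcal{Z}_T\times C([0,T];\hat{K})$ via \cite[Corollary 3.12]{BRZEZNIAK20131627} together with stability under products, and obtains joint tightness from the tightness lemma for $\{P^n\}$ combined with the fixed Radon law of $W$ on the Polish space $C([0,T];\hat K)$. Your explicit product-of-compact-sets argument for joint tightness and your remark deferring the identification of $\tilde W_k$, $\tilde W$ as Wiener processes to a later step match the paper's structure exactly.
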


\begin{rem}\label{rmk:1}
 By Proposition \ref{prop:skoro},
$\tilde W_k(\cdot)$ has the same law as $W(\cdot)$
on  $C([0,T];\hat{K})$. 
Hence $\tilde W_k(\cdot)$ is a $Q$-Wiener process on
$\hat{K}$ with $Q=i i^\ast$.
Consequently,  $\tilde W_k$ can be viewed as an $\ell^2$-cylindrical Wiener process.
The same argument applies to $\tilde W$.
\end{rem}
Denote by $
\tilde{\mathbb{F}}^k=\{\mathcal{F}_t^k\}_{t\geq 0}$ the filtration generated by 
$(\tilde{u}_{k},\tilde{W}_k)$ satisfying the usual condition.
Denote by $
\tilde{\mathbb{F}}=\{\mathcal{F}_t\}_{t\geq 0}$ the filtration generated by 
 $(\tilde{u},\tilde W)$ satisfying the usual condition.
  The next theorem tells us that  the limit  is a martingale solution to the equations \eqref{eq:main}.
  \begin{thm}\label{thm:martingalesol}
  	{\sl $(\tilde{u},\tilde W,\tilde{\Omega},\tilde{\mathcal{F}},\tilde{\mathbb{F}},\tilde{\mathbb{P}})$ solves the equation \eqref{eq:main} 
  	in the sense of a similar definition from Definition 3.2 of  \cite{BrzezniakMotylOndrejat2017},
  	i.e. 
  	\begin{enumerate}
  	\item $\tilde W$ is an $\ell^2$-cylindrical Wiener process with respect to 
  	the complete filtration $
\tilde{\mathbb{F}}$.
  	\item  $\tilde{u}\colon [0,T] \times\tilde{\Omega}\rightarrow \tilde H^1  $ with $\tilde{\mathbb{P}}$-$a.s.$ paths in $\mathcal{Z}_T$.
  	\item For every $t\in [0,T]$ and $l\in C_c^{\infty}(\mathbb{R}^{2})$ with  $\dive l =0$,  $\tilde{\mathbb{P}}$-$a.s.$,
$$\tilde u(0)=u_0$$
and 
$$( \tilde u(t),l) =( u_0,l) +\int_{0}^{t}\langle-\tilde u\cdot\nabla \tilde u+\partial_{1}^{2}\tilde u-\lambda \tilde u,l\rangle\,ds+\int_{0}^{t}( \sigma d\tilde W(s),l).$$
\item It holds that
\begin{equation}\label{momentestimate:tildeu}
\tilde{\mathbb E}\!\Big(\sup_{t\in[0,T]}\|\tilde u(t)\|_{H^1}^{2}\Big)<\infty.
\end{equation}
\end{enumerate}
}
\end{thm}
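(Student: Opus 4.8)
The plan is to identify $(\tilde u,\tilde W,\tilde\Omega,\tilde{\mathcal F},\tilde{\mathbb F},\tilde{\mathbb P})$ as a martingale solution by passing to the limit along the $\tilde{\mathbb P}$-almost surely convergent subsequence furnished by Proposition \ref{prop:skoro}, using repeatedly that each pair $(\tilde u_k,\tilde W_k)$ has the same law as $(u_{n_k},W)$ and hence inherits every property expressible through bounded continuous functionals on $\mathcal Z_T\times C([0,T];\hat K)$. Two of the four assertions are almost immediate. Assertion (2) holds because $\tilde u$ is by construction a $\mathcal Z_T$-valued random variable and $\mathcal Z_T\subset C([0,T];\tilde H^1_w)$, so $\tilde{\mathbb P}$-a.s. the paths lie in $\mathcal Z_T$ and $\tilde u(t)\in\tilde H^1$ for every $t$; the identity $\tilde u(0)=u_0$ then follows from the continuity of evaluation at $0$ in $C([0,T];\tilde H^1_w)$ together with $\tilde u_k(0)=u_{0n_k}\to u_0$ in $\tilde H^1_w$. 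Assertion (4) follows from the uniform $H^1$-bound \eqref{solution-H1-estimate} for $u_{n_k}$ (whose initial data are uniformly bounded), the equality of laws $\tilde u_k\overset{d}{=}u_{n_k}$, and the weak lower semicontinuity of the map $v\mapsto\sup_{t\in[0,T]}\|v(t)\|_{H^1}^2$ under $C([0,T];\tilde H^1_w)$-convergence, via Fatou's lemma.

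For Assertion (1), Remark \ref{rmk:1} already shows that each $\tilde W_k$, and the limit $\tilde W$, are $Q$-Wiener processes that may be regarded as $\ell^2$-cylindrical Wiener processes. It remains to check adaptedness and independence of increments relative to $\tilde{\mathbb F}$. Since $W$ has increments independent of $\sigma(u_{n_k}(r),W(r):r\le s)$, the same factorization of characteristic functionals holds for $(\tilde u_k,\tilde W_k)$ by equality of laws; testing against bounded continuous functions of the restriction of $(\tilde u_k,\tilde W_k)$ to $[0,s]$ and passing to the $\tilde{\mathbb P}$-a.s. limit by bounded convergence shows that $\tilde W(t)-\tilde W(s)$ is independent of $\mathcal F_s$, so $\tilde W$ is an $\ell^2$-cylindrical Wiener process with respect to $\tilde{\mathbb F}$.

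The core is Assertion (3). For a fixed $l\in C_c^\infty(\R^2)$ with $\dive l=0$, define
$$
\Lambda_t(u,W):=(u(t),l)-(u_0,l)-\int_0^t\big[\langle u\otimes u,\nabla l\rangle-\langle\partial_1 u,\partial_1 l\rangle-\lambda(u,l)\big]\,ds-\Big(\int_0^t\sigma\,dW,l\Big),
$$
where I have used $\dive u=0$ to rewrite the advective and the $\partial_1^2$ terms after integration by parts. By equality of laws $\Lambda_t(\tilde u_k,\tilde W_k)=0$ $\tilde{\mathbb P}$-a.s., so it suffices to prove $\Lambda_t(\tilde u_k,\tilde W_k)\to\Lambda_t(\tilde u,\tilde W)$ in $\tilde{\mathbb P}$-probability. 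The term $(\tilde u_k(t),l)$ and the two linear integrals converge $\tilde{\mathbb P}$-a.s. directly from the $C([0,T];\tilde H^1_w)$- and $L^2([0,T];L^2_{loc})$-convergence built into the $\mathcal Z_T$-topology. For the nonlinear term I would use that $\nabla l$ is supported in a fixed compact set $K$ and write $\tilde u_k\otimes\tilde u_k-\tilde u\otimes\tilde u=(\tilde u_k-\tilde u)\otimes\tilde u_k+\tilde u\otimes(\tilde u_k-\tilde u)$; combining the strong convergence $\tilde u_k\to\tilde u$ in $L^2([0,T];L^2(K))$ with the uniform bound in $L^\infty([0,T];H^1)\hookrightarrow L^\infty([0,T];L^4)$ and Hölder's inequality yields $\tilde{\mathbb P}$-a.s. convergence of this term as well.

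The main obstacle is the stochastic term $\big(\int_0^t\sigma\,d\tilde W_k,l\big)$, because the map $W\mapsto\int_0^t\sigma\,dW$ is not continuous on $C([0,T];\hat K)$ and the $\tilde{\mathbb P}$-a.s. convergence of $\tilde W_k$ cannot be transferred directly. Exploiting that the noise is additive with $\sigma\in L_2(\ell^2,\tilde H^1)$, I would truncate: for the projection $P_N$ onto the first $N$ modes of $\ell^2$, $\big(\int_0^t\sigma P_N\,dW,l\big)=\sum_{j\le N}(\sigma e_j,l)\,W^j(t)$ is a finite linear combination of coordinate functionals, each continuous on $\hat K$, so it passes to the $\tilde{\mathbb P}$-a.s. limit in $k$. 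The Hilbert--Schmidt estimate $\tilde{\mathbb E}\big|\big(\int_0^t\sigma(I-P_N)\,d\tilde W_k,l\big)\big|^2=t\sum_{j>N}|(\sigma e_j,l)|^2\le t\,\|l\|_{L^2}^2\sum_{j>N}\|\sigma e_j\|_{L^2}^2$ tends to $0$ as $N\to\infty$ uniformly in $k$, so a standard $\varepsilon/3$ (double-limit) argument gives $\big(\int_0^t\sigma\,d\tilde W_k,l\big)\to\big(\int_0^t\sigma\,d\tilde W,l\big)$ in $L^2(\tilde\Omega)$, hence in probability. Collecting these convergences yields $\Lambda_t(\tilde u,\tilde W)=0$ $\tilde{\mathbb P}$-a.s. for every $t$ and $l$, which together with Assertion (1) establishes the weak formulation and identifies $(\tilde u,\tilde W,\dots)$ as a martingale solution in the sense of Definition 3.2 of \cite{BrzezniakMotylOndrejat2017}.
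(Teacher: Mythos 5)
Your proposal is correct, and its overall architecture is the same as the paper's: transfer the weak formulation from $(u_{n_k},W)$ to $(\tilde u_k,\tilde W_k)$ by equality of laws, then pass to the limit term by term using the $\mathcal Z_T\times C([0,T];\hat K)$-convergence of Proposition~\ref{prop:skoro}, identifying $\tilde u(0)=u_0$ via uniqueness of weak limits and the nonlinear term via the $L^2([0,T];L^2_{loc})$-convergence. Where you genuinely depart from the paper is the stochastic term. The paper disposes of it in one line: since $\sigma$ is additive, $\tilde W_k\to\tilde W$ in $C([0,T];\hat K)$ a.s.\ is claimed to imply $M_k^l\to M^l$ in $C([0,T])$ a.s. As you correctly observe, the functional $w\mapsto\langle w,\sigma^{\ast}l\rangle_{\ell^2}$ need not extend continuously to $\hat K$ (this requires $\sigma^{\ast}l$ to lie in the image of $\hat K^{\ast}$ under the adjoint embedding, which can fail for a general Hilbert--Schmidt $\sigma$), so the paper's assertion is imprecise as stated. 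Your truncation argument repairs exactly this point: finite-mode integrals $\sum_{j\le N}(\sigma e_j,l)\,W^j(t)$ pass to the limit as continuous functionals of the paths, while the tail is controlled uniformly in $k$ by the It\^o isometry $t\sum_{j>N}(\sigma e_j,l)^2$, giving convergence in probability, which suffices for the a.s.\ identity at each fixed $t$. This buys rigor at the cost of one extra approximation step; note only that continuity of the coordinate functionals on $\hat K$ is a property of the chosen embedding (it holds for the standard weighted-$\ell^2$ choice, so you should fix such a $\hat K$). Similarly, your treatments of assertions (1) and (4) (factorization of characteristic functionals; Fatou combined with weak lower semicontinuity of $v\mapsto\sup_t\|v(t)\|_{H^1}^2$ and the uniform bound \eqref{solution-H1-estimate}) fill in details that the paper dismisses with ``follow directly.''

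Two small slips, neither fatal. First, $\Lambda_t(\tilde u_k,\tilde W_k)$ is not zero: the $k$-th solution starts from $u_{0n_k}$, so $\Lambda_t(\tilde u_k,\tilde W_k)=(u_{0n_k}-u_0,l)$, which tends to $0$ by the weak convergence $u_{0n_k}\rightharpoonup u_0$; the limit passage is unaffected, but the identity as you wrote it is false for finite $k$. Second, the uniform-in-$k$ bound in $L^\infty([0,T];H^1)$ invoked for the nonlinear term is not free; it follows $\tilde{\mathbb{P}}$-a.s.\ from the convergence $\tilde u_k\to\tilde u$ in $C([0,T];\tilde H^1_w)$ via the uniform boundedness principle, and a sentence to that effect should be added.
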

 \begin{proof}
  1,2 and 4 follow directly from 
  Proposition \ref{prop:skoro}, Remark \ref{rmk:1} as well as the construction of the filtration.
  For 3, note that
  since $(\tilde u_k,\tilde W_k)$ has the same law as $(u_{n_k},W)$ on
$\mathcal Z_T\times C([0,T];\hat K)$, the  formulation satisfied by
$(u_{n_k},W)$ transfers to $(\tilde u_k,\tilde W_k)$,
which implies  for each $k\in\N$ and $t\in [0,T]$, 
  $\tilde{\mathbb{P}}$-$a.s.$ 
  \begin{equation}\label{initialvalue:k}
  	\tilde u_k(0)=u_{0n_k}
  	\end{equation}
and 
$$( \tilde u_k(t),l) =( u_{0n_k},l) +\int_{0}^{t}\langle-\tilde u_k\cdot\nabla \tilde u_k+\partial_{1}^{2}\tilde u_k-\lambda \tilde u_k,l\rangle\,ds+\int_{0}^{t}( \sigma d\tilde W_k(s),l).$$
  Then we pass to the limit when $k\to\infty$.
  Since  $\tilde u_k\to  \tilde u$ in $C([0,T];\tilde H^1_{w})$, $\tilde{\mathbb{P}}$-$a.s.$,
  we obtain that $\tilde u_k(0)\rightharpoonup  \tilde u(0)$  in $\tilde H^1$ $\tilde{\mathbb{P}}$-$a.s.$, hence by \eqref{initialvalue:k},
   $u_{0n_k}\rightharpoonup \tilde u(0)$  in $\tilde H^1$ $\tilde{\mathbb{P}}$-$a.s.$.
   On the other hand, $u_{0n_k}\rightharpoonup u_0$  in $\tilde H^1$. Therefore, by the uniqueness of the weak limit  in $\tilde H^1$, $\tilde u(0)=u_0$  $\tilde{\mathbb{P}}$-$a.s.$.
   The convergence in $L^2([0,T];L^2_{loc})$ 
  guarantees the convergence of the nonlinear term 
  $\int_{0}^{t}\langle-\tilde u_k\cdot\nabla \tilde u_k,l\rangle\,ds$ for fixed test function $l$.

   For the martingale term, 
    define $M_k^l(t):=\int_0^t(\sigma\,d\tilde W_k(s),l)$ and $M^l(t):=\int_0^t(\sigma\,d\tilde W(s),l)$.
    Since $\sigma$ is deterministic and does not depend on $t$,
$\tilde W_k\to\tilde W$ in $C([0,T];\hat K)$ implies
$M_k^l\to M^l$ in $C([0,T])$, $\tilde{\PP}$-a.s.

 	 The proof of passing to the limit  is also similar to (even easier, since our noise is additive) \cite[Theorem 4.11]{BrzezniakMotylOndrejat2017}, hence we omit more details. 
 	  	       \end{proof}

Since pathwise uniqueness holds (which can be established by adapting the methods of \cite{LiangZhangZhu2021}), we know the martingale solutions are  unique in law (see, e.g.,\cite[Corollary 7.7]{BRZEZNIAK20131627}). Therefore, for each $t\in [0,T]$, $\tilde u(t)$ and $u(t)$ have the same law on $(\tilde H^1,\mathscr{B}(\tilde H^1))$. (Since $\tilde H^1$ is a separable Hilbert space, $\mathscr{B}(\tilde H^1)=\mathscr B(\tilde H^1_w)$ )
 As a result, for all $\varphi\in \mathscr{C}_b(\tilde H^1_w)$,
it holds for all $t\in [0,T]$ that
\begin{equation}\label{eq:ulimitexp}
	\tilde{\mathbb E}\bigl[\varphi\bigl(\tilde u(t)\bigr) \bigr]
=\EE\bigl[\varphi\bigl(u(t)\bigr) \bigr]=P_t\varphi(u_0).  
\end{equation}
Since 
$\tilde{u}_{k}(t)$ has the same law as $u_{n_k}(t)$ on 
$(\tilde H^1,\mathscr{B}(\tilde H^1))$,
we obtain  for all $\varphi\in \mathscr{C}_b(\tilde H^1_w)$ that
\begin{equation}\label{eq:ukexp}
\tilde{\mathbb E}\bigl[\varphi\bigl(\tilde u_k(t)\bigr) \bigr]
=\EE\bigl[\varphi\bigl(u_{n_k}(t)\bigr) \bigr]=P_t\varphi(u_{0n_k}).  
\end{equation}
By the dominated convergence theorem, the boundedness of 
$\varphi$, together with the convergence 
 $\tilde u_k\to \tilde u$ in $C([0,T];\tilde H^1_w)$, for all $\varphi\in \mathscr{C}_b(\tilde H^1_w)$, $t\in [0,T]$,
$$\lim\limits_{k\to\infty}\tilde{\mathbb E}\bigl[\varphi\bigl(\tilde u_k(t)\bigr) \bigr]
=\tilde{\mathbb E}\bigl[\varphi\bigl(\tilde u(t)\bigr) \bigr].
$$
Combining this with \eqref{eq:ulimitexp} and \eqref{eq:ukexp}, we prove for all $\varphi\in \mathscr{C}_b(\tilde H^1_w)$, $t\in [0,T]$, that
 $$\lim\limits_{k\to\infty}\EE\bigl[\varphi\bigl(u_{n_k}(t)\bigr) \bigr]
=\EE\bigl[\varphi\bigl(u(t)\bigr) \bigr],
$$
which means 
$$\lim\limits_{k\to\infty}P_t\varphi(u_{0n_k})= P_t \varphi(u_0).
$$
Using the sub-subsequence argument, we deduce that 
$$\lim\limits_{n\to\infty}P_t\varphi(u_{0n})= P_t \varphi(u_0),
$$
 which finishes the proof. 
\subsection{The proof of \eqref{eq:MS}}
Now we prove \eqref{eq:MS} for $\mathcal{H}=\tilde H^1$.
For simplicity we choose $\nu=\delta_0$.
By Chebyshev's inequality, for all $R>0$,
$$
P_t^\ast\delta_0\big(\{\|x\|_{H^1}>R\}\big)
=\mathbb P\big(\|u^0(t)\|_{ H^1}>R\big)
\le \frac{1}{R^2}\,\mathbb E\|u^0(t)\|_{H^1}^2,
$$
where $u^0$ is the probabilistically strong solution of 
\eqref{eq:main} with initial value $0$.
Taking the time average, 
applying  \eqref{Ito:L2} and \eqref{Ito:H1} (which are shown later in the proof of Lemmas \ref{lem42} and \ref{lem43}  and their proofs rely only on 
It\^o's formula) with $z_0=0$, taking the expectation, and applying Gronwall's inequality,
we deduce that 
$$
\sup_{T>0}\frac1T\int_0^T P_t^\ast\delta_0\big(\{\|x\|_{ H^1}>R\}\big)\,dt
\le \frac{1}{R^2}\sup_{T>0}\frac1T\int_0^T\,\mathbb E\|u^0(t)\|_{H^1}^2\,dt
\le \frac{C}{R^2},
$$
where $C$ is a constant depending on $\lambda$ and the Hilbert-Schmidt norms of $\sigma$, but not on $T$.
Choosing a large enough $R$, we finish the proof.

\section{Uniqueness of Invariant Measures}\label{sec4}
In this section we present and prove the main result of this paper. 

\subsection{Main theorem}

The following theorem is the main theorem of this paper.
\begin{thm}\label{thm:unique}
There exists a constant $C>0$ (depending only on the two--dimensional
Sobolev--Gagliardo--Nirenberg and Burkholder--Davis--Gundy constants) such that 
for all
$$
\lambda > C(\,\|\sigma\|_{L_2(\ell^2,H^1)}^{2} + 1),
$$
the Markov semigroup associated with \eqref{eq:main} admits at most one invariant
probability measure on $(\tilde H^1,\mathcal B(\tilde H^1))$.
\end{thm}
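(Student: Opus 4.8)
The plan is to prove uniqueness through a synchronous (same-noise) coupling combined with the asymptotic coupling criterion underlying \cite{SunQiuTang2024}. Given two initial data $u_0,v_0\in\tilde H^1$, I would run two copies of \eqref{eq:main} driven by the \emph{same} cylindrical Wiener process $W$, producing solutions $u(\cdot;u_0)$ and $v(\cdot;v_0)$. Because the noise enters additively, it cancels in the difference $w:=u-v$, which therefore solves the \emph{pathwise} (noise-free) equation
\begin{equation*}
\partial_t w-\partial_1^2 w+\lambda w+u\cdot\nabla w+w\cdot\nabla v=-\nabla\pi,\qquad \dive w=0,\quad w(0)=u_0-v_0,
\end{equation*}
where I used $u\cdot\nabla u-v\cdot\nabla v=u\cdot\nabla w+w\cdot\nabla v$. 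Since the second copy $v$ is a genuine solution whose law is exactly $P_t(v_0,\cdot)$, the pair $(u,v)$ is an admissible coupling, so it suffices to exhibit a set of positive probability on which $\|w(t)\|_{H^1}\to0$ as $t\to\infty$; the asymptotic coupling criterion then yields at most one invariant measure. Note that this construction uses no non-degeneracy of $\sigma$ and is meaningful even for $\sigma\equiv0$, matching the claim of the theorem.

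Next I would derive a combined $H^1$ energy inequality for $w$. Testing the difference equation with $w$ and with $-\Delta w$ (the pressure drops out by $\dive w=0$, and $\int(u\cdot\nabla w)\cdot w\,dx=0$) gives, for $E(t):=\|w(t)\|_{H^1}^2$,
\begin{equation*}
\tfrac{d}{dt}E+2\|\partial_1 w\|_{L^2}^2+2\|\partial_1\nabla w\|_{L^2}^2+2\lambda E
=-2\int(w\cdot\nabla v)\cdot w\,dx+2\int(u\cdot\nabla w+w\cdot\nabla v)\cdot\Delta w\,dx .
\end{equation*}
The delicate point is that only the \emph{horizontal} dissipation $\|\partial_1(\cdot)\|_{L^2}^2$ is available, so the vertical-derivative contributions in the nonlinearity cannot be bounded by the full gradient. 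Here I would use the anisotropic Gagliardo--Nirenberg inequalities (integrating by parts in $x_2$ and exploiting $\dive w=0$ to trade vertical derivatives for horizontal ones) to absorb the nonlinear terms into the dissipation, arriving at a differential inequality
\begin{equation*}
\tfrac{d}{dt}E(t)+2\lambda E(t)\le C\,\Phi(t)\,E(t),
\end{equation*}
where $\Phi(t)$ is built from the instantaneous energies and anisotropic dissipations of $u$ and $v$, e.g.\ $\Phi=\|u\|_{H^1}^2+\|v\|_{H^1}^2+\|\partial_1\nabla u\|_{L^2}^2+\|\partial_1\nabla v\|_{L^2}^2$. Grönwall's inequality then gives $E(t)\le E(0)\exp\!\big(-2\lambda t+C\int_0^t\Phi(s)\,ds\big)$.

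To force the exponent to $-\infty$ I would control the time average of $\Phi$ on a set of positive probability. Applying It\^o's formula to $\|u\|_{H^1}^2$ (this is where \eqref{Ito:L2}, \eqref{Ito:H1} and the cancellation Lemma~\ref{lem:cancel} enter, the nonlinear term vanishing in $H^1$) and integrating yields the identity
\begin{equation*}
\|u(t)\|_{H^1}^2+2\int_0^t\!\big(\|\partial_1 u\|_{L^2}^2+\|\partial_1\nabla u\|_{L^2}^2+\lambda\|u\|_{H^1}^2\big)\,ds
=\|u_0\|_{H^1}^2+\|\sigma\|_{L_2(\ell^2,H^1)}^2\,t+\mathcal M(t),
\end{equation*}
and similarly for $v$, where $\mathcal M$ is a martingale whose quadratic variation satisfies $\langle\mathcal M\rangle_t\le C\|\sigma\|_{L_2(\ell^2,H^1)}^2\int_0^t\|u\|_{H^1}^2\,ds$, i.e.\ is dominated by the dissipation on the left. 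An exponential martingale (exponential Burkholder--Davis--Gundy / supermartingale) estimate then produces a common set $\Omega_0$ with $\PP(\Omega_0)>0$ on which $\int_0^t\Phi(s)\,ds\le a+b\,t$ for all $t$, with $b\le C(\|\sigma\|_{L_2(\ell^2,H^1)}^2+1)$ and $a$ a finite constant depending on $\|u_0\|_{H^1},\|v_0\|_{H^1}$. On $\Omega_0$ the Grönwall exponent is at most $Ca+(Cb-2\lambda)t$, which tends to $-\infty$ once $\lambda>C(\|\sigma\|_{L_2(\ell^2,H^1)}^2+1)$; hence $\|w(t)\|_{H^1}\to0$ on $\Omega_0$, and the asymptotic coupling criterion gives at most one invariant measure, proving Theorem~\ref{thm:unique}.

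I expect the main obstacle to be the nonlinear estimate in the second step: closing the energy inequality using \emph{only} horizontal dissipation forces a careful use of the mixed-norm anisotropic Gagliardo--Nirenberg inequalities together with incompressibility, and the precise powers obtained there govern both the admissible form of $\Phi$ and, through the exponential martingale bound, the final threshold on $\lambda$. A secondary technical point is justifying the $H^1$ energy identity for the difference $w$ at the regularity level provided by Theorem~\ref{thm:wp}, which I would handle by working on the Galerkin approximations before passing to the limit.
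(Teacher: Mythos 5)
Your overall architecture --- same-noise coupling, pathwise equation for the difference, exponential-martingale control of the solutions' energies on a positive-probability event, Gr\"onwall, and the asymptotic coupling criterion with a threshold on $\lambda$ --- is the paper's. The genuine gap is your central estimate: you propose to close an $H^1$ energy inequality for $w=u-v$ by testing with $-\Delta w$, and this step cannot be carried out with only horizontal dissipation. Testing with $-\Delta w$ produces, among others, the term $\int (w\cdot\nabla v)\cdot\partial_2^2 w\,\mathrm{d}x$; after integrating by parts in $x_2$ it contains
$$
\int_{\R^2} w_2\,\partial_2\partial_2 v_1\,\partial_2 w_1\,\mathrm{d}x ,
$$
in which two vertical derivatives fall on $v$ (integrating by parts again merely moves them onto $w$, giving $\partial_2^2 w_1$). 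Neither $\|\partial_2^2 v\|_{L^2}$ nor $\|\partial_2^2 w\|_{L^2}$ is controlled by anything at your disposal: the equation supplies only the dissipation $\|\partial_1\nabla w\|_{L^2}^2$, and the estimates of Lemmas~\ref{lem42}--\ref{lem43} control only $\int_0^t\|\partial_1\nabla v\|_{L^2}^2\,\mathrm ds$ and $\sup_{s\le t}\|\nabla v(s)\|_{L^2}^2$. Every anisotropic Gagliardo--Nirenberg route for this trilinear term costs a positive power of a second vertical derivative (any $L^q_{\mathrm v}$ norm with $q>2$, or $L^\infty_{\mathrm v}$ norm, of a function already carrying one $\partial_2$ requires exactly that), and incompressibility ($\partial_2 w_2=-\partial_1 w_1$, $\partial_2 v_2=-\partial_1 v_1$) rescues only the second components, not the $i=1$ term displayed above. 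So the differential inequality $\frac{\mathrm d}{\mathrm dt}E+2\lambda E\le C\,\Phi\,E$ with your proposed $\Phi$ is not obtainable; you flagged this as ``the main obstacle'', but it is a structural obstruction rather than a technicality, and it is precisely why the paper never performs an $H^1$ estimate on the difference.

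The idea your proposal is missing is that $L^2$ convergence of the difference suffices. Lemma~\ref{lem:delta-estimate} estimates $\delta=u-v$ only in $L^2$: with the decomposition $-v\cdot\nabla\delta-\delta\cdot\nabla u$, the term $(v\cdot\nabla\delta,\delta)$ vanishes by incompressibility, and $(\delta\cdot\nabla u,\delta)$ is bounded via the anisotropic Gagliardo--Nirenberg inequality using only $\|\partial_1\delta\|_{L^2}$ (absorbed by the dissipation of $\delta$) together with $\|\partial_1 u\|_{L^2}$, $\|\partial_2 u\|_{L^2}$, $\|\partial_1\partial_2 u\|_{L^2}$ of the \emph{single} solution $u$, all of which are controlled pathwise on the one event $E_R^{(u)}$. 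To make $\|\delta(t_n)\|_{L^2}\to 0$ sufficient for uniqueness on the state space $\tilde H^1$, the paper then proves that the $H^1$- and $L^2$-Borel $\sigma$-algebras on $\tilde H^1$ coincide (Lemma~\ref{lem:Borel-equal}) and that the bounded Lipschitz functions for the $L^2$ metric are measure-determining (Proposition~\ref{prop:BL}); these feed into Proposition~\ref{prop:coupling}, which is formulated with the $L^2$ norm along the sequence $t_n$. If you replace your $H^1$ step by this $L^2$-only estimate and add the two measurability lemmas, your argument becomes the paper's proof. A minor further difference: your Gr\"onwall exponent involves both $u$ and $v$, forcing an intersection of two good events, whereas the paper's decomposition puts only $u$ in the exponent, so a single event suffices --- harmless either way, since both events have probability close to one.
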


The proof is based on a coupling method.

\subsection{Auxiliary measurable lemmas}
Before proceeding with the probabilistic energy estimates and coupling method, 
we first introduce two auxiliary results concerning 
the measurable and topological structure of $\tilde H^1$,
both of which will be applied in the coupling argument.

\begin{lem}[Equality of Borel $\sigma$-algebras on $\tilde H^1$]\label{lem:Borel-equal}
Let $\mathcal B(\tilde H^1)$ and $\mathcal B_{L^2}(\tilde H^1)$ denote the Borel $\sigma$-algebras on $\tilde H^1$ induced by $\|\cdot\|_{H^1}$ and $\|\cdot\|_{L^2}$, respectively. Then
$$
\mathcal B(\tilde H^1)=\mathcal B_{L^2}(\tilde H^1).
$$
\end{lem}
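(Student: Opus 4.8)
The plan is to prove the two inclusions separately, the nontrivial one being $\mathcal B(\tilde H^1)\subseteq \mathcal B_{L^2}(\tilde H^1)$. The easy direction follows at once from $\|u\|_{L^2}\le\|u\|_{H^1}$: this makes the identity map $(\tilde H^1,\|\cdot\|_{H^1})\to(\tilde H^1,\|\cdot\|_{L^2})$ continuous, so every $L^2$-open set is $H^1$-open, whence $\mathcal B_{L^2}(\tilde H^1)\subseteq\mathcal B(\tilde H^1)$.

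For the reverse inclusion I would exploit that $(\tilde H^1,\langle\cdot,\cdot\rangle_{H^1})$ is a separable Hilbert space, so its norm-Borel $\sigma$-algebra is generated by a countable family of continuous linear functionals. Concretely, fix an orthonormal basis $\{e_n\}_{n\in\N}$ of $\tilde H^1$; since $\|u\|_{H^1}^2=\sum_n\langle u,e_n\rangle_{H^1}^2$ and, in a separable metric space, the open balls centered at a countable dense set generate the Borel sets, one obtains $\mathcal B(\tilde H^1)=\sigma\big(\{u\mapsto\langle u,e_n\rangle_{H^1}:n\in\N\}\big)$. It therefore suffices to show that each generator $u\mapsto\langle u,e_n\rangle_{H^1}$ is $\mathcal B_{L^2}(\tilde H^1)$-measurable. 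I would choose the basis to consist of smooth, compactly supported, divergence-free vector fields: such fields are dense in $\tilde H^1$, and applying Gram--Schmidt in the $H^1$ inner product to a countable dense subset of them produces an orthonormal basis whose elements are finite linear combinations of smooth fields, hence smooth.

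The key step is then an integration by parts. For smooth compactly supported $e_n$ and any $u\in\tilde H^1$,
$$
\langle u,e_n\rangle_{H^1}=(u,e_n)+(\nabla u,\nabla e_n)=(u,(I-\Delta)e_n),
$$
where the last identity uses only that $e_n\in C_c^\infty$ and $u\in H^1$, and where $(I-\Delta)e_n\in L^2$ is a fixed field. Thus each generator is the $L^2$ inner product of $u$ with a fixed $L^2$ function, hence $\|\cdot\|_{L^2}$-continuous and in particular $\mathcal B_{L^2}(\tilde H^1)$-measurable. This yields $\mathcal B(\tilde H^1)\subseteq\mathcal B_{L^2}(\tilde H^1)$, and combined with the easy inclusion gives equality.

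I expect the main obstacle to lie in the two structural facts underlying the argument rather than in any computation: first, that separability permits generating the $H^1$-Borel $\sigma$-algebra by countably many continuous functionals, and second, that the basis may be taken smooth so that the integration by parts identity genuinely produces an $L^2$ functional. As an alternative to the explicit computation, one could invoke the Lusin--Souslin theorem: the continuous injection $(\tilde H^1,\|\cdot\|_{H^1})\hookrightarrow H$ maps the Polish space $\tilde H^1$, and each of its Borel subsets, onto Borel subsets of $H$, so that $\mathcal B(\tilde H^1)$ is identified with the trace of $\mathcal B(H)$ on $\tilde H^1$, i.e. with $\mathcal B_{L^2}(\tilde H^1)$; this route trades the hands-on functional estimate for a descriptive--set--theoretic input.
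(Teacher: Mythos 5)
Your proof is correct, and your primary argument takes a genuinely different route from the paper's. The paper disposes of the nontrivial inclusion $\mathcal B(\tilde H^1)\subseteq\mathcal B_{L^2}(\tilde H^1)$ by descriptive set theory: $(\tilde H^1,\|\cdot\|_{H^1})$ is Polish, the identity into $(\tilde H^1,\|\cdot\|_{L^2})$ is a continuous injection, and a Lusin--Souslin-type theorem (quoted from Bogachev) makes it a Borel isomorphism onto its image, so every $H^1$-Borel set is $L^2$-Borel; this is precisely the ``alternative'' you sketch in your closing lines, so that remark essentially reproduces the paper's own proof. Your main route instead exhibits $\mathcal B(\tilde H^1)$ as generated by countably many $\|\cdot\|_{L^2}$-continuous linear functionals: an $H^1$-orthonormal basis $\{e_n\}$ gives $\mathcal B(\tilde H^1)=\sigma\big(\langle\cdot,e_n\rangle_{H^1}:n\in\N\big)$, since $\|u-v\|_{H^1}^2=\sum_n\langle u-v,e_n\rangle_{H^1}^2$ shows that every ball centered at a point of a countable dense set is cylinder-measurable; choosing the basis smooth, compactly supported and divergence-free (Gram--Schmidt applied to such fields), integration by parts yields $\langle u,e_n\rangle_{H^1}=(u,(I-\Delta)e_n)$, a functional that is Lipschitz for $\|\cdot\|_{L^2}$. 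Both steps are sound, so the argument closes. The trade-off is this: your approach is elementary and constructive, avoiding the Lusin--Souslin theorem entirely, but it rests on one nontrivial external input that you assert without proof, namely the density of divergence-free $C_c^\infty$ fields in $\tilde H^1(\R^2)$. That density is a standard whole-space fact (provable by smooth truncation plus a Bogovskii correction of the resulting divergence on annuli; beware that the naive 2D stream-function-plus-cutoff argument runs into a logarithmic obstruction), but it deserves a citation, e.g.\ Galdi's book on Navier--Stokes. The paper's argument, by contrast, requires no approximation theory and applies verbatim whenever a Polish topology refines another separable metrizable one, at the price of invoking a measure-theoretic black box.
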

\begin{proof}
First note that the identity map 
$\mathrm{id}\colon (\tilde H^1,\|\cdot\|_{H^1}) \to (\tilde H^1,\|\cdot\|_{L^2})$ 
is continuous. Therefore every $L^2$-open subset of $\tilde H^1$ is $H^1$-open, thus
$$
\mathcal B_{L^2}(\tilde H^1)\subset \mathcal B(\tilde H^1).
$$
For the converse inclusion, note that $(\tilde H^1,\|\cdot\|_{H^1})$ is a separable Hilbert space and hence a Polish space. 
By a standard result on Borel mappings (see Bogachev \cite[Vol.~II, Thm.~6.8.6]{Bogachev2007-II}), 
the identity map 
$$
\mathrm{id}\colon (\tilde H^1,\|\cdot\|_{H^1}) \to (\tilde H^1,\|\cdot\|_{L^2})
$$
is a Borel isomorphism onto its image, and this image (which is $\tilde H^1$ itself) is a Borel subset of $(\tilde H^1,\|\cdot\|_{L^2})$.
As a result, every $A\in\mathcal B(\tilde H^1)$ is Borel in $(\tilde H^1,\|\cdot\|_{L^2})$.
Hence
$$
\mathcal B(\tilde H^1)\subset \mathcal B_{L^2}(\tilde H^1),
$$
and the two $\sigma$-algebras coincide.
\end{proof}

From now on we 
set $S:=(\tilde H^1,\|\cdot\|_{L^2})$, hence by Lemma~\ref{lem:Borel-equal}, its Borel $\sigma$-algebra is 
$\mathcal B(\tilde H^1)$.  

\begin{prop}[Bounded Lipschitz class is measure determining]\label{prop:BL}

Define
$$
\mathrm{BL}_1(S):=\{f:S\to\mathbb{R}\,:\ \|f\|_\infty\le 1,\ \mathrm{Lip}(f)\le 1\}.
$$
Then for all $\mu,\nu\in\mathcal P(S)$ which satisfy that 
\begin{equation}
\sup_{f\in \mathrm{BL}_1(S)}\Big|\int f\,\mathrm d\mu-\int f\,\mathrm d\nu\Big|=0,
\end{equation}
we have $\mu=\nu $.
\end{prop}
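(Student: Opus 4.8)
The plan is to exploit that $S=(\tilde H^1,\|\cdot\|_{L^2})$ is a metric space and that the hypothesis, although stated only for the unit class $\mathrm{BL}_1(S)$, propagates by homogeneity to \emph{every} bounded Lipschitz function. Indeed, if $g\colon S\to\mathbb R$ is bounded and Lipschitz and not identically zero, then with $c:=\max(\|g\|_\infty,\mathrm{Lip}(g))>0$ the rescaled function $g/c$ lies in $\mathrm{BL}_1(S)$, so the vanishing of the supremum forces $\int g\,\mathrm d\mu=\int g\,\mathrm d\nu$ for all bounded Lipschitz $g$. The strategy is then to upgrade this equality of integrals to equality of measures on all Borel sets by first matching $\mu$ and $\nu$ on closed sets and then invoking a Dynkin $\pi$--$\lambda$ argument.

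For the first step I would approximate indicators of closed sets by bounded Lipschitz functions. Fix a closed set $F\subset S$ and write $d(x,F):=\inf_{y\in F}\|x-y\|_{L^2}$; this distance functional is $1$-Lipschitz. Define $f_n(x):=\max\bigl(0,\,1-n\,d(x,F)\bigr)$. Each $f_n$ is bounded by $1$ and Lipschitz (with constant $n$), hence bounded Lipschitz, so $\int f_n\,\mathrm d\mu=\int f_n\,\mathrm d\nu$ for every $n$ by the previous paragraph. As $n\to\infty$ the sequence $f_n$ decreases pointwise to $\mathbf 1_F$: on $F$ we have $d(x,F)=0$ and $f_n\equiv1$, while for $x\notin F$ the closedness of $F$ gives $d(x,F)>0$, so $f_n(x)\to0$. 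Since $0\le f_n\le1$ and $\mu,\nu$ are probability measures, dominated convergence yields
$$
\mu(F)=\lim_{n\to\infty}\int f_n\,\mathrm d\mu=\lim_{n\to\infty}\int f_n\,\mathrm d\nu=\nu(F)
$$
for every closed $F$.

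To conclude I would apply the uniqueness theorem for measures agreeing on a generating $\pi$-system. The collection of closed subsets of $S$ is closed under finite intersections, hence is a $\pi$-system, and it generates the Borel $\sigma$-algebra $\mathcal B(S)=\mathcal B(\tilde H^1)$. Since $\mu$ and $\nu$ are probability measures that agree on this $\pi$-system (including $S$ itself, with total mass $1$), the $\pi$--$\lambda$ theorem gives $\mu=\nu$ on $\mathcal B(\tilde H^1)$. The only points requiring care are the homogeneity reduction from $\mathrm{BL}_1(S)$ to all bounded Lipschitz functions and the pointwise monotone convergence $f_n\downarrow\mathbf 1_F$, which uses precisely that $F$ is closed; the remaining steps are routine, and in particular no separability of $S$ is needed for this argument.
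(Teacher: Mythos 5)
Your proof is correct, but it takes a genuinely different route from the paper: the paper does not actually prove Proposition \ref{prop:BL} at all, it simply cites it as a classical fact for separable metric spaces (Dudley, \emph{Real Analysis and Probability}, Prop.~11.3.2). You instead give a complete self-contained argument, and every step checks out: the homogeneity rescaling $g\mapsto g/\max(\|g\|_\infty,\mathrm{Lip}(g))$ legitimately upgrades the hypothesis from $\mathrm{BL}_1(S)$ to all bounded Lipschitz functions; the truncated-distance functions $f_n(x)=\max\bigl(0,\,1-n\,d(x,F)\bigr)$ are bounded Lipschitz and decrease pointwise to $\mathbf 1_F$ precisely because $F$ is closed, so dominated convergence gives $\mu(F)=\nu(F)$ for every closed $F$; and the closed sets form a $\pi$-system generating $\mathcal B(S)$, so the $\pi$--$\lambda$ theorem finishes the proof. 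The comparison is this: the paper's citation is economical and entirely appropriate for a classical fact, while your argument makes the paper self-contained and is in fact slightly more general, since (as you correctly observe) it never uses separability of $S$, whereas the cited statement is formulated for separable metric spaces. In the paper's setting $S=(\tilde H^1,\|\cdot\|_{L^2})$ is separable anyway, so nothing hinges on this extra generality, but it is a clean observation and your proof could replace the citation without loss.
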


\begin{proof}
This is classical for separable metric spaces, see,  e.g., R.~M. Dudley,
\cite[Prop.~11.3.2]{Dudley2002}.
\end{proof}

\subsection{Energy estimates}

 From now on we set $K:=\|\sigma\|_{L_2(\ell^2,H^1)}^2$.
For a solution $z$ of \eqref{eq:main} with initial data $z_0$, set
$$
\Ezero{z}(t):=\|z(t)\|_{L^2}^2-\|z_0\|_{L^2}^2
+2\!\int_0^t\!\|\partial_1 z(s)\|_{L^2}^2\,\mathrm ds
+(2\lambda-1)\!\int_0^t\!\|z(s)\|_{L^2}^2\,\mathrm ds
- K t ,
$$
and
$$
\Eone{z}(t):=\|\nabla z(t)\|_{L^2}^2-\|\nabla z_0\|_{L^2}^2
+2\!\int_0^t\!\|\partial_1\nabla z(s)\|_{L^2}^2\,\mathrm ds
+(2\lambda-1)\!\int_0^t\!\|\nabla z(s)\|_{L^2}^2\,\mathrm ds
- K t .
$$
 Then we have the following two lemmas.
\begin{lem}\label{lem42}
Fix $z_0\in\tilde H^1$ and let $z(t;z_0)$ be the corresponding strong solution of \eqref{eq:main} in the sense of Theorem~\ref{thm:wp}. Then for all $R>0$,
\begin{equation}
\PP\!\left(
\sup_{t\ge 0} \mathcal{E}_0^z(t) \ge 2R
\right) \le
\begin{cases}
\exp\!\bigl(-\dfrac{R}{2K}\bigr), & \text{if } K>0,\\[4pt]
0, & \text{if } K=0.
\end{cases}
\end{equation}
\end{lem}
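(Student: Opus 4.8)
The plan is to derive an It\^o identity for $\|z(t)\|_{L^2}^2$, to observe that the coefficient $2\lambda-1$ (rather than $2\lambda$) appearing in $\Ezero{z}$ leaves behind exactly the dissipative term needed to dominate the quadratic variation of the stochastic integral, and then to bound the supremum of $\Ezero{z}$ by an exponential supermartingale via Doob's maximal inequality.

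First I would write down the $L^2$-energy identity. Applying It\^o's formula to $\|z(t)\|_{L^2}^2$ (rigorously on the Galerkin approximations and passing to the limit as in Theorem~\ref{thm:wp}), the pressure gradient is annihilated because $\dive z=0$, the transport term $(z,z\cdot\nabla z)$ vanishes since $\dive z=0$ (by a cutoff argument on $\R^2$ analogous to that in Lemma~\ref{lem:cancel}), and $(z,\partial_1^2 z)=-\|\partial_1 z\|_{L^2}^2$. Writing $M_0(t):=2\int_0^t(z(s),\sigma\,\mathrm dW(s))$ for the martingale part and noting that the It\^o correction is $\|\sigma\|_{L_2(\ell^2,L^2)}^2\,t$, this yields
\[
\|z(t)\|_{L^2}^2-\|z_0\|_{L^2}^2+2\!\int_0^t\!\|\partial_1 z\|_{L^2}^2\,\mathrm ds+2\lambda\!\int_0^t\!\|z\|_{L^2}^2\,\mathrm ds=\|\sigma\|_{L_2(\ell^2,L^2)}^2\,t+M_0(t).
\]
Substituting this into the definition of $\Ezero{z}$ and using $\|\sigma\|_{L_2(\ell^2,L^2)}^2\le\|\sigma\|_{L_2(\ell^2,H^1)}^2=K$, I obtain the pathwise relation
\[
\Ezero{z}(t)=M_0(t)-\!\int_0^t\!\|z\|_{L^2}^2\,\mathrm ds+\bigl(\|\sigma\|_{L_2(\ell^2,L^2)}^2-K\bigr)t\le M_0(t)-\!\int_0^t\!\|z\|_{L^2}^2\,\mathrm ds.
\]

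The crucial point is that the leftover term $-\int_0^t\|z\|_{L^2}^2\,\mathrm ds$ controls $\langle M_0\rangle$. Expanding $M_0$ against an orthonormal basis of $\ell^2$ and using Cauchy--Schwarz,
\[
\langle M_0\rangle_t=4\!\int_0^t\!\|\sigma^{\ast} z(s)\|_{\ell^2}^2\,\mathrm ds\le 4\,\|\sigma\|_{L_2(\ell^2,L^2)}^2\!\int_0^t\!\|z\|_{L^2}^2\,\mathrm ds\le 4K\!\int_0^t\!\|z\|_{L^2}^2\,\mathrm ds,
\]
so that, when $K>0$, $-\int_0^t\|z\|_{L^2}^2\,\mathrm ds\le-\tfrac1{4K}\langle M_0\rangle_t$ and hence $\Ezero{z}(t)\le M_0(t)-\tfrac1{4K}\langle M_0\rangle_t$. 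Choosing $\beta=\tfrac1{2K}$, for which $\tfrac{\beta}{4K}=\tfrac{\beta^2}{2}$, gives $\exp(\beta\,\Ezero{z}(t))\le\exp\!\bigl(\beta M_0(t)-\tfrac{\beta^2}{2}\langle M_0\rangle_t\bigr)=:\mathcal E(t)$, and $\mathcal E$ is a nonnegative local martingale, hence a supermartingale, with $\mathcal E(0)=1$.

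Finally I would invoke the maximal inequality for the nonnegative supermartingale $\mathcal E$ to conclude
\[
\PP\Bigl(\sup_{t\ge0}\Ezero{z}(t)\ge 2R\Bigr)\le\PP\Bigl(\sup_{t\ge0}\mathcal E(t)\ge e^{2\beta R}\Bigr)\le e^{-2\beta R}=e^{-R/K}\le e^{-R/(2K)}.
\]
In the deterministic case $K=0$ one has $\sigma\equiv0$, so $M_0\equiv0$ and the identity above reduces to $\Ezero{z}(t)=-\int_0^t\|z\|_{L^2}^2\,\mathrm ds\le0<2R$, whence the event has probability $0$. I expect the only real technical obstacle to be the passage to the infinite horizon $t\ge0$: I would first apply the supermartingale maximal inequality on each interval $[0,T]$ and then let $T\to\infty$, using that the events $\{\sup_{[0,T]}\mathcal E\ge e^{2\beta R}\}$ increase to $\{\sup_{[0,\infty)}\mathcal E\ge e^{2\beta R}\}$; the rigorous justification of the It\^o identity for the full (non-Galerkin) solution follows the same approximation scheme already underlying Theorem~\ref{thm:wp}.
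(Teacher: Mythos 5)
Your proposal is correct and follows essentially the same route as the paper: the It\^o identity for $\|z(t)\|_{L^2}^2$, the absorption of the quadratic variation into the leftover dissipation via $\langle M_0\rangle_t\le 4K\int_0^t\|z\|_{L^2}^2\,\mathrm ds$, the exponential martingale bound, and the same trivial treatment of the case $K=0$; the only difference is that you prove the exponential martingale inequality inline (stochastic exponential supermartingale plus Doob's maximal inequality and a $T\to\infty$ limit) where the paper cites it from Revuz--Yor. As a minor bonus, your sharp version yields $e^{-R/K}$, which you correctly relax to the stated $e^{-R/(2K)}$.
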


\begin{proof}
Apply It\^o's formula in Hilbert space (see, e.g.,\cite[Chap.~IV]{DPZ1992}) to $\norm{z(t)}_{L^2}^2$, noting that $(z\cdot\nabla z, z)=0$, we obtain that
\begin{equation}\label{Ito:L2}
\begin{split}
\norm{z(t)}_{L^2}^2 + 2\int_0^t \norm{\partial_1 z(s)}_{L^2}^2\,\mathrm{d}s + 2\lambda \int_0^t \norm{z(s)}_{L^2}^2\,\mathrm{d}s\\
= \norm{z_0}_{L^2}^2 + 2\int_0^t (\sigma\,\mathrm{d}W(s),z(s)) + \int_0^t \norm{\sigma}_{L_2(\ell^2,L^2)}^2\,\mathrm{d}s.
\end{split}
\end{equation}
If $K=0$, then $\sigma\equiv0$ and
\eqref{eq:main} is a deterministic equation.
We immediately know that for all $t\geq 0$ that 
$$
\|z(t)\|_{L^2}^2 + 2\int_0^t \|\partial_1 z(s)\|_{L^2}^2\,\mathrm{d}s
+ 2\lambda \int_0^t \|z(s)\|_{L^2}^2\,\mathrm{d}s
= \|z_0\|_{L^2}^2 .
$$
Since
$$\Ezero{z}(t)=\|z(t)\|_{L^2}^2-\|z_0\|_{L^2}^2
+2\!\int_0^t\!\|\partial_1 z(s)\|_{L^2}^2\,\mathrm ds
+(2\lambda-1)\!\int_0^t\!\|z(s)\|_{L^2}^2\,\mathrm ds,
$$
we obtain that $\Ezero{z}(t)=-\int_0^t\!\|z(s)\|_{L^2}^2\,\mathrm ds\leq 0$ for all $t\geq 0$.
In this case the claimed probability bound is trivial. 
Therefore, in the following we may assume  $K>0$.
Define
\begin{equation}
M_t := \int_0^t (\sigma\,\mathrm{d}W(s),z(s)).
\end{equation}
Then we have the exponential martingale bound
\begin{equation}\label{exponentialmartingale1}
\PP\!\left(\sup_{t\ge 0} \big(M_t - \gamma \langle M\rangle_t\big) \ge R \right) \le e^{-\gamma R},
\end{equation}
which follows from the exponential martingale inequality for continuous local martingales (see, e.g., Revuz and Yor \cite[Chap.~II, Prop.~1.8 \& Chap.~IV, Ex.~3.16 for extension to continuous local martingales]{RevuzYor1999}).\\
Observe that
\begin{equation}
\langle M\rangle_t = \int_0^t \norm{\sigma^\ast z}_{\ell^2}^2\,\mathrm{d}s
\le \int_0^t \norm{\sigma}_{L_2(\ell^2,L^2)}^2 \norm{z(s)}_{L^2}^2\,\mathrm{d}s
\le K \int_0^t \norm{z(s)}_{L^2}^2\,\mathrm{d}s,
\end{equation}
since $\|\sigma\|_{L_2(\ell^2,L^2)}^2\le \|\sigma\|_{L_2(\ell^2,H^1)}^2=K$. 
Thus
$$
\mathcal{E}_0^z(t)
\le 2M_t - \int_0^t \|z(s)\|_{L^2}^2\,\mathrm{d}s
\le 2M_t - \frac{1}{K}\,\langle M\rangle_t.
$$
Hence, by  \eqref{exponentialmartingale1}, for all $R>0$,
$$
\PP\Big(\sup_{t\ge0}\mathcal{E}_0^z(t)\ge 2R\Big)
\le
\PP\Big(\sup_{t\ge0}\big(M_t-\tfrac{1}{2K}\langle M\rangle_t\big)\ge R\Big)
\le \exp\!\Big(-\frac{R}{2K}\Big).
$$
 This completes the proof of the lemma.
\end{proof}

\begin{lem}\label{lem43}
Fix $z_0\in\tilde H^1$ and let $z(t;z_0)$ be the corresponding strong solution of \eqref{eq:main}.
Then for all $R>0$,
\begin{equation}
\PP\!\left(
\sup_{t\ge 0} \mathcal{E}_1^z(t) \ge 2R
\right) \le
\begin{cases}
\exp\!\bigl(-\dfrac{R}{2K}\bigr), & \text{if } K>0,\\[4pt]
0, & \text{if } K=0.
\end{cases}
\end{equation}
\end{lem}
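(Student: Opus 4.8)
The plan is to reproduce the argument of Lemma~\ref{lem42} one derivative higher, replacing the $L^2$-energy by the $H^1$-energy $\norm{\nabla z(t)}_{L^2}^2$ (equivalently, since $z$ is divergence free on $\R^2$, the enstrophy $\norm{\curl z}_{L^2}^2$). Concretely, I would apply It\^o's formula to $\norm{\nabla z(t)}_{L^2}^2$ --- rigorously on the Galerkin approximations of Theorem~\ref{thm:wp} and then pass to the limit --- in order to obtain the $H^1$-analogue of \eqref{Ito:L2}, namely
\begin{equation}\label{Ito:H1}
\begin{split}
\norm{\nabla z(t)}_{L^2}^2 + 2\int_0^t \norm{\partial_1\nabla z(s)}_{L^2}^2\,\mathrm ds + 2\lambda\int_0^t \norm{\nabla z(s)}_{L^2}^2\,\mathrm ds\\
= \norm{\nabla z_0}_{L^2}^2 + 2\int_0^t (\nabla\sigma\,\mathrm dW(s),\nabla z(s)) + \int_0^t \norm{\nabla\sigma}_{L_2(\ell^2,L^2)}^2\,\mathrm ds .
\end{split}
\end{equation}

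The heart of the matter is that the drift contribution of the nonlinear term vanishes: pairing $z\cdot\nabla z$ against $-\Delta z$ gives exactly $\int_{\R^2}(z\cdot\nabla z)\cdot\Delta z\,\mathrm dx = 0$ by the two-dimensional cancellation Lemma~\ref{lem:cancel}. This is the one step where dimension two is essential, and it is the main obstacle: without it the cubic term $(z\cdot\nabla z,\Delta z)$ could not be absorbed into the anisotropic dissipation and the estimate would not close. The remaining drift terms are routine: the pressure gradient $\nabla p$ drops out against $-\Delta z$ because $\dive z = 0$; the horizontal dissipation $\partial_1^2 z$ produces $-2\norm{\partial_1\nabla z}_{L^2}^2$ after two integrations by parts; the damping $-\lambda z$ produces $-2\lambda\norm{\nabla z}_{L^2}^2$; and the It\^o correction contributes the trace term $\norm{\nabla\sigma}_{L_2(\ell^2,L^2)}^2 = \sum_k\norm{\nabla(\sigma e_k)}_{L^2}^2 \le K$.

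With \eqref{Ito:H1} in hand I would close exactly as in Lemma~\ref{lem42}. Set $N_t := \int_0^t (\nabla\sigma\,\mathrm dW(s),\nabla z(s))$; by Cauchy--Schwarz its bracket satisfies $\langle N\rangle_t \le K\int_0^t\norm{\nabla z(s)}_{L^2}^2\,\mathrm ds$. Rearranging \eqref{Ito:H1} and using $\norm{\nabla\sigma}_{L_2(\ell^2,L^2)}^2 \le K$ gives
$$
\Eone{z}(t) \le 2N_t - \int_0^t\norm{\nabla z(s)}_{L^2}^2\,\mathrm ds \le 2N_t - \frac1K\langle N\rangle_t ,
$$
so that $\{\sup_{t\ge0}\Eone{z}(t)\ge 2R\}\subset\{\sup_{t\ge0}(N_t-\tfrac1{2K}\langle N\rangle_t)\ge R\}$, and the exponential martingale inequality \eqref{exponentialmartingale1} with $\gamma=\tfrac1{2K}$ yields the bound $\exp(-R/(2K))$. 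In the degenerate case $K=0$ one has $\sigma\equiv0$, the martingale term and trace term vanish, and \eqref{Ito:H1} forces $\Eone{z}(t) = -\int_0^t\norm{\nabla z(s)}_{L^2}^2\,\mathrm ds\le 0 < 2R$, so the probability is zero. This completes the plan.
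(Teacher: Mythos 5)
Your proposal is correct and follows essentially the same route as the paper: It\^o's formula applied to $\|\nabla z\|_{L^2}^2$, the two-dimensional cancellation Lemma~\ref{lem:cancel} to kill the nonlinear term, the bracket bound $\langle N\rangle_t \le K\int_0^t\|\nabla z\|_{L^2}^2\,\mathrm ds$, and the exponential martingale inequality with $\gamma=\tfrac1{2K}$, with the $K=0$ case handled separately. The only cosmetic difference is that you state the $H^1$-energy identity as an equality while the paper records it as an inequality (which is what survives the Galerkin limit), but your argument only ever uses the ``$\le$'' direction, so nothing is lost.
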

\begin{proof}

Consider the functional $F:H^1\to\R$, $F(u)=\|\nabla u\|_{L^2}^2$.
Using It\^o's formula in Hilbert spaces for $F$ (see, e.g., \cite[Chap.~IV]{DPZ1992}),
together with the cancellation Lemma~\ref{lem:cancel}, we obtain that
\begin{equation}
\begin{split}
\label{Ito:H1}
\|\nabla z(t)\|_{L^2}^2
&+ 2\!\int_0^t\!\|\partial_1\nabla z(s)\|_{L^2}^2\,\mathrm ds
+ 2\lambda\!\int_0^t\!\|\nabla z(s)\|_{L^2}^2\,\mathrm ds\\
&\leq   \|\nabla z_0\|_{L^2}^2 + 2 M_t + Kt,
\end{split}
\end{equation}
where $M_t:=\int_0^t \langle \nabla z(s),\nabla\sigma\,\mathrm dW(s)\rangle$.
If $K=0$, then $\sigma\equiv 0$ and \eqref{eq:main} is deterministic.
Similar to Lemma~\ref{lem42}, we obtain
that $\mathcal E_1^z(t)\leq - \int_0^t\!\|\nabla z(s)\|_{L^2}^2\,\mathrm ds\le0$ for all $t\ge0$ and the claim is trivial.
Thus we may assume $K>0$ in the following part of the proof.
From \eqref{Ito:H1} we get
$$
\mathcal{E}_1^z(t)
\leq  2 M_t - \int_0^t\!\|\nabla z(s)\|_{L^2}^2\,\mathrm ds ,$$
and
$$
\langle M\rangle_t
= \int_0^t \|(\nabla\sigma)^{\ast}\nabla z(s)\|_{\ell^2}^2\,\mathrm ds
\le K \int_0^t\!\|\nabla z(s)\|_{L^2}^2\,\mathrm ds .
$$
Hence
$$
\mathcal{E}_1^z(t)\le 2 M_t - \frac{1}{K}\,\langle M\rangle_t .
$$
Similar to the proof of Lemma~\ref{lem42},
by the exponential martingale inequality for continuous local martingales
(\cite[Chap.~II, Prop.~1.8; Chap.~IV, Ex.~3.16]{RevuzYor1999}),
for all $R>0$ we have
$$
\PP\Big(\sup_{t\ge0}\mathcal{E}_1^z(t)\ge 2R\Big)
\le
\PP\Big(\sup_{t\ge0}\big(M_t-\tfrac{1}{2K}\langle M\rangle_t\big)\ge R\Big)
\le \exp\!\Big(-\frac{R}{2K}\Big).
$$
This completes the proof.
\end{proof}

 \subsection{Asymptotic convergence and coupling argument}
 
Let $u$ and $v$ be two solutions of \eqref{eq:main}
defined on the same probability space, driven by the same Wiener process, 
 with initial data
$u_0,v_0\in\tilde H^1$, respectively. Let $T\in (0,\infty)$ be fixed and
 set $t_n=nT$.
Denote by $\Gamma$ the joint law of the sequence
$\big(u(t_n),v(t_n)\big)_{n\in\mathbb N}$.

\noindent
Take $\delta = u-v$ and $q = p_u-p_v$. Then $\delta$ satisfies that
\begin{equation}
\left\{
\begin{aligned}
&\partial_t \delta - \partial_1^2 \delta + \lambda \delta = -\nabla q - v\cdot\nabla \delta - \delta\cdot\nabla u,\\
&\dive \delta = 0,\\
&\delta\big|_{t=0} = u_0 - v_0.
\end{aligned}
\right.
\end{equation}

\begin{lem}\label{lem:delta-estimate}
We have the following estimate for $\delta$:
\begin{equation*}
\begin{aligned}
\|\delta(t)\|_{L^{2}}^{2}
&\le \|\delta_{0}\|_{L^{2}}^{2}\,
   \exp\Bigg\{-2\lambda t \\
&\qquad\qquad
+ C_{0}\!\int_{0}^{t}\!\Big(
\|\partial_{1}u(s)\|_{L^{2}}^{\frac23}
  \|\partial_{1}\partial_{2}u(s)\|_{L^{2}}^{\frac23}
+
\|\partial_{2}u(s)\|_{L^{2}}^{\frac23}
  \|\partial_{1}\partial_{2}u(s)\|_{L^{2}}^{\frac23}
\Big)\mathrm{d}s
\Bigg\}.
\end{aligned}
\end{equation*}
where $C_{0}>0$ is a constant that does not depend on $\lambda$, $u$ and $\delta$.
\end{lem}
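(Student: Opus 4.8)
The plan is to derive the bound from a pathwise energy identity for $\|\delta(t)\|_{L^2}^2$ together with an anisotropic estimate of the only surviving nonlinear term. Since $u$ and $v$ are driven by the \emph{same} additive noise, the stochastic forcing cancels in the equation for $\delta=u-v$, so $\delta$ obeys an evolution equation without any stochastic term; I may therefore argue $\omega$ by $\omega$, using the regularity supplied by Theorem~\ref{thm:wp} to justify the computations. First I would test the $\delta$-equation against $\delta$ in $L^2$: the pressure term drops because $\dive\delta=0$, the transport term $(v\cdot\nabla\delta,\delta)$ vanishes because $\dive v=0$, and the anisotropic dissipation gives $(\partial_1^2\delta,\delta)=-\|\partial_1\delta\|_{L^2}^2$. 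This leaves the identity
$$
\tfrac12\tfrac{d}{dt}\|\delta\|_{L^2}^2+\|\partial_1\delta\|_{L^2}^2+\lambda\|\delta\|_{L^2}^2
=-\big((\delta\cdot\nabla)u,\delta\big),
$$
so that everything reduces to bounding $N:=\big|((\delta\cdot\nabla)u,\delta)\big|$ by $\eps\|\partial_1\delta\|_{L^2}^2$ plus $\|\delta\|_{L^2}^2$ times the desired $u$-factor.

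For $N$ I would invoke the anisotropic triple-product inequality
$$
\int_{\R^2}|abc|\,\mathrm dx
\le C\,\|a\|_{L^2}\,\|b\|_{L^2}^{1/2}\|\partial_1 b\|_{L^2}^{1/2}\,\|c\|_{L^2}^{1/2}\|\partial_2 c\|_{L^2}^{1/2},
$$
which follows from the one-dimensional Agmon inequality applied successively in the horizontal and vertical variables (as in \cite{LiangZhangZhu2021}). Writing $N=\big|\sum_{j,k}\int(\partial_k u_j)\,\delta_k\delta_j\big|$, I would estimate the four terms separately, assigning the three roles in the inequality (the plain $L^2$ factor, the factor carrying a horizontal derivative, the factor carrying a vertical derivative) so that two things hold: the derivative that lands on the $u$-factor is always the \emph{mixed} one $\partial_1\partial_2 u$ — i.e. a factor $\partial_1 u_j$ is placed in the vertical role and a factor $\partial_2 u_j$ in the horizontal role — and no uncontrolled derivative $\partial_2\delta_1$ ever appears. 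The mechanism is that whenever $\delta_2$ is forced into the vertical role, incompressibility converts $\partial_2\delta_2$ into $-\partial_1\delta_1$, which is controlled by the dissipation, while the remaining $\delta$-factor stays in the plain or horizontal role. Each term is thereby bounded by $C\|\delta\|_{L^2}^{3/2}\|\partial_1\delta\|_{L^2}^{1/2}\|\partial_\ast u\|_{L^2}^{1/2}\|\partial_1\partial_2 u\|_{L^2}^{1/2}$ with $\partial_\ast\in\{\partial_1,\partial_2\}$.

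Finally, Young's inequality with exponents $(4,\tfrac43)$ applied to $\|\partial_1\delta\|_{L^2}^{1/2}$ against the remaining factors absorbs a term $\eps\|\partial_1\delta\|_{L^2}^2$ into the left-hand side and produces the coefficient $\|\partial_\ast u\|_{L^2}^{2/3}\|\partial_1\partial_2 u\|_{L^2}^{2/3}$ in front of $\|\delta\|_{L^2}^2$; summing over the four terms yields precisely the two summands in the statement, with a constant depending only on the anisotropic interpolation constant and thus independent of $\lambda$, $u$ and $\delta$. Taking $\eps=1$ to cancel the dissipation and applying Gronwall's inequality to
$$
\tfrac{d}{dt}\|\delta\|_{L^2}^2\le\big(-2\lambda+C_0\,g_0(t)\big)\,\|\delta\|_{L^2}^2,
\qquad
g_0(t)=\|\partial_1 u\|_{L^2}^{2/3}\|\partial_1\partial_2 u\|_{L^2}^{2/3}+\|\partial_2 u\|_{L^2}^{2/3}\|\partial_1\partial_2 u\|_{L^2}^{2/3},
$$
then gives the claimed exponential bound.

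I expect the third step to be the main obstacle: the nonlinear term genuinely contains the uncontrolled vertical derivative $\partial_2\delta_1$, and since the whole-space setting provides dissipation only in the horizontal direction, the entire argument hinges on choosing the anisotropic roles component-by-component and exploiting $\partial_2\delta_2=-\partial_1\delta_1$ so that only horizontal derivatives of $\delta$ and the single mixed derivative $\partial_1\partial_2 u$ survive. Any role assignment that is blind to the component structure would either force a $\partial_2\delta_1$ or a bad second derivative $\partial_1^2u$ or $\partial_2^2u$ onto the surviving factors, so the bookkeeping in this step is what makes the stated exponents $\tfrac23$ appear and is where the care must be concentrated.
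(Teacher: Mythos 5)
Your proposal is correct and follows essentially the same route as the paper: the same pathwise $L^2$ energy identity (with the additive noise cancelling), the same anisotropic Gagliardo--Nirenberg/Agmon estimate of $((\delta\cdot\nabla)u,\delta)$ with the component-wise role assignment and the use of $\partial_2\delta_2=-\partial_1\delta_1$ to avoid the uncontrolled $\partial_2\delta_1$, and the same Young-with-exponents-$(4,\tfrac43)$ absorption followed by Gronwall. The paper packages the triple-product step as H\"older in the mixed norms $L_{\mathrm h}^\infty(L_{\mathrm v}^2)$, $L_{\mathrm h}^2(L_{\mathrm v}^\infty)$ plus Lemma~3.3 of \cite{LiangZhangZhu2021}, which is exactly your inequality.
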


\begin{proof}
Recall that $\delta=u-v$ and $q=p_{u}-p_{v}$ satisfy that
$$
\left\{
\begin{aligned}
&\partial_{t}\delta-\partial_{1}^{2}\delta+\lambda\delta
=-\nabla q - v\cdot\nabla\delta-\delta\cdot\nabla u,\\
&\operatorname{div}\delta=0,\\
&\delta\big|_{t=0}=\delta_{0}.
\end{aligned}
\right.
$$
Taking the $L^{2}$ inner product of this equation, 
we obtain that 
\begin{equation}\label{energyinequality1}
\frac{\mathrm{d}}{\mathrm{d}t}\|\delta\|_{L^{2}}^{2}
+2\|\partial_{1}\delta\|_{L^{2}}^{2}
+2\lambda\|\delta\|_{L^{2}}^{2}
\le 2\bigl|(\delta\cdot\nabla u,\delta)\bigr|.
\end{equation}
Observing that
\begin{equation*}
\begin{split}
|(\delta\cdot\nabla u,\delta)|
&=|(\delta^1\partial_1 u+\delta^2\partial_2 u,\delta)|\\
&\le 
\bigl(\|\delta^1\|_{L_{\mathrm h}^{\infty}(L_{\mathrm v}^{2})}\|\partial_{1}u\|_{L_{\mathrm h}^{2}(L_{\mathrm v}^{\infty})}
  + \|\delta^2\|_{L_{\mathrm h}^{2}(L_{\mathrm v}^{\infty})}\|\partial_{2}u\|_{L_{\mathrm h}^{\infty}(L_{\mathrm v}^{2})}\bigr)
  \|\delta\|_{L^{2}}.
\end{split}
\end{equation*}

Now by Lemma~3.3 of \cite{LiangZhangZhu2021}
(which is an anisotropic
Gagliardo--Nirenberg inequality adapted to the norms
$L_{\mathrm h}^\infty(L_{\mathrm v}^2)$ and
$L_{\mathrm h}^2(L_{\mathrm v}^\infty)$),
using the fact that $\dive \delta=0 $,
we deduce that
\begin{equation*}
\begin{aligned}
\bigl|(\delta\!\cdot\!\nabla u,\delta) \bigr|
&\le
\|\delta\|_{L^{2}}^{\frac12}\|\partial_{1}\delta\|_{L^{2}}^{\frac12}
 \|\partial_{1}u\|_{L^{2}}^{\frac12}
 \|\partial_{1}\partial_{2}u\|_{L^{2}}^{\frac12}\,
 \|\delta\|_{L^{2}}  \\
&\quad+
\|\delta\|_{L^{2}}^{\frac12}\|\partial_{1}\delta\|_{L^{2}}^{\frac12}
 \|\partial_{2}u\|_{L^{2}}^{\frac12}
 \|\partial_{1}\partial_{2}u\|_{L^{2}}^{\frac12}\,
 \|\delta\|_{L^{2}} .
\end{aligned}
\end{equation*}
Using Young's inequality, we obtain that
\begin{equation*}
2\bigl|(\delta\cdot\nabla u,\delta)_{L^{2}}\bigr|
\le
\|\partial_{1}\delta\|_{L^{2}}^{2}
+C_{0}\Big(
\|\partial_{1}u\|_{L^{2}}^{\frac23}\|\partial_{1}\partial_{2}u\|_{L^{2}}^{\frac23}
+\|\partial_{2}u\|_{L^{2}}^{\frac23}\|\partial_{1}\partial_{2}u\|_{L^{2}}^{\frac23}
\Big)\|\delta\|_{L^{2}}^{2},
\end{equation*}
for some constant $C_{0}>0$ independent of $\lambda$,$u$, and $\delta$.
Combining this with \eqref{energyinequality1}, we deduce that
\begin{equation*}
\frac{\mathrm{d}}{\mathrm{d}t}\|\delta\|_{L^{2}}^{2}
\le
\Big(
-2\lambda
+C_{0}\big(
\|\partial_{1}u\|_{L^{2}}^{\frac23}\|\partial_{1}\partial_{2}u\|_{L^{2}}^{\frac23}
+\|\partial_{2}u\|_{L^{2}}^{\frac23}\|\partial_{1}\partial_{2}u\|_{L^{2}}^{\frac23}
\big)
\Big)\|\delta\|_{L^{2}}^{2}.
\end{equation*}
Gronwall's lemma on $[0,t]$ then yields
\begin{equation*}
\begin{aligned}
\|\delta(t)\|_{L^{2}}^{2}
&\le \|\delta_{0}\|_{L^{2}}^{2}
   \exp\Bigg\{-2\lambda t  \\
&\qquad\quad
   +C_{0}\int_{0}^{t} \Big(
     \|\partial_{1}u(s)\|_{L^{2}}^{\frac23}
     \|\partial_{1}\partial_{2}u(s)\|_{L^{2}}^{\frac23}
    +\|\partial_{2}u(s)\|_{L^{2}}^{\frac23}
     \|\partial_{1}\partial_{2}u(s)\|_{L^{2}}^{\frac23}
   \Big)\,\mathrm{d}s\Bigg\},
\end{aligned}
\end{equation*}
which finishes the proof.
\end{proof}

Now we define the event

\begin{equation}\label{def:ER}
E_R^{(u)} := \Big\{\, \omega\in\Omega:\;
\sup_{t\ge0}\mathcal{E}_0^u(t,\omega)\le 2R,\;
\sup_{t\ge0}\mathcal{E}_1^u(t,\omega)\le 2R
\Big\}.
\end{equation}
By Lemmas~\ref{lem42} and \ref{lem43} we know that
\begin{equation*}
\PP\big(E_R^{(u)}\big)\ \ge\
\begin{cases}
1-2e^{-R/(2K)}, & \text{if } K>0,\\[4pt]
1, & \text{if } K=0.
\end{cases}
\end{equation*}
Choose and fix $R=R(K)$ large enough so that  $\PP\big(E_R^{(u)}\big)>\tfrac13$.
To show the final result, we need
the following proposition, which is an asymptotic coupling criterion.
\begin{prop}[Asymptotic coupling criterion]
\label{prop:coupling}
Let
$\widetilde{\mathcal C}\big(\delta_{u_0}\mathbb P^{\mathbb N},
\delta_{v_0}\mathbb P^{\mathbb N}\big)$ be
the set of couplings on
$(\tilde H^1)^{\mathbb N}\!\times (\tilde H^1)^{\mathbb N}$
whose marginals correspond to the Markov chains
$(u(t_n))_{n\in\mathbb N}$ and $(v(t_n))_{n\in\mathbb N}$ started from $u_0$
and $v_0$, respectively.
If there exists $\Gamma\in\widetilde{\mathcal C}
(\delta_{u_0}\mathbb P^{\mathbb N},\delta_{v_0}\mathbb P^{\mathbb N})$
such that
$$
\Gamma(D)>0,\qquad
D:=\big\{(u,v)\in (\tilde H^1)^{\mathbb N}\!\times (\tilde H^1)^{\mathbb N}:
\ \lim_{n\to\infty}\|u(t_n)-v(t_n)\|_{L^2}=0\big\},
$$
then the Markov semigroup associated with \eqref{eq:main}
admits at most one invariant probability measure on
$(\tilde H^1,\mathcal B(\tilde H^1))$.
\end{prop}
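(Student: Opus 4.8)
The plan is to argue by contradiction and to work throughout with the time-$T$ skeleton chain, whose one-step kernel is $P_T$ and whose trajectories are exactly $(u(t_n))_{n\in\N}$. Since every measure invariant for the continuous semigroup $(P_t)_{t\ge0}$ is in particular $P_T$-invariant, it suffices to show that $P_T$ admits at most one invariant measure on $S=(\tilde H^1,\|\cdot\|_{L^2})$; recall that by Lemma~\ref{lem:Borel-equal} the Borel structure of $S$ is exactly $\mathcal B(\tilde H^1)$, so this is the correct state space in which to read the $L^2$-closeness event $D$. Assume then that $P_T$ has two distinct invariant measures. By the ergodic decomposition theorem there exist two \emph{distinct ergodic} $P_T$-invariant measures $\mu\neq\nu$, and distinct ergodic measures are mutually singular, so $\mu\perp\nu$.

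Next I would invoke the pointwise (Birkhoff) ergodic theorem for the chain. Fix a countable family $\{f_k\}\subset\mathrm{BL}_1(S)$ that is measure-determining (such a family exists because $S$ is separable). For the stationary chain driven by the ergodic law $\mu$ the associated path measure is shift-ergodic, so Birkhoff's theorem, after disintegration over the starting point, produces a $\mu$-full set of initial data $u_0$ for which, $\PP$-almost surely,
$$
\frac1N\sum_{n=0}^{N-1}f_k\big(u(t_n)\big)\ \longrightarrow\ \int f_k\,d\mu\qquad\text{for every }k;
$$
the analogous statement holds for $\nu$ and a $\nu$-full set of $v_0$. I then fix one such $\mu$-generic $u_0$ and one such $\nu$-generic $v_0$ and apply the hypothesis to this pair to obtain a coupling $\Gamma$ with $\Gamma(D)>0$. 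Because the two marginals of $\Gamma$ are precisely the laws of the chains issued from $u_0$ and from $v_0$, the event $G_\mu$ on which the $\mu$-limit above holds and the corresponding event $G_\nu$ each have $\Gamma$-probability one.

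The decisive step is to combine these convergences on $D$. On $D$ one has $\|u(t_n)-v(t_n)\|_{L^2}\to0$, hence for each $1$-Lipschitz $f_k$,
$$
\big|f_k(u(t_n))-f_k(v(t_n))\big|\le \|u(t_n)-v(t_n)\|_{L^2}\longrightarrow0,
$$
so the difference of the two Ces\`aro averages converges to $0$. Since $\Gamma(D)>0$ while $\Gamma(G_\mu)=\Gamma(G_\nu)=1$, one has $\Gamma(D\cap G_\mu\cap G_\nu)\ge\Gamma(D)>0$; choosing a single sample point in this set forces $\int f_k\,d\mu=\int f_k\,d\nu$ simultaneously for all $k$. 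As $\{f_k\}$ is measure-determining (equivalently, via Proposition~\ref{prop:BL}), this yields $\mu=\nu$, contradicting $\mu\perp\nu$ and completing the proof.

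I expect the main obstacle to be the measure-theoretic bookkeeping concentrated in the middle paragraph rather than any hard analytic estimate: one must verify that $D$ and the Birkhoff events are genuinely measurable on the product path space, that Birkhoff's theorem is applied in the $\|\cdot\|_{L^2}$-topology (which is exactly why $S$ is metrized by $\|\cdot\|_{L^2}$ while the coupling only controls the $L^2$-distance), and that reducing $\mathrm{BL}_1(S)$ to a \emph{countable} determining family $\{f_k\}$ is legitimate, so that the uncountably many limit conditions collapse to a single $\Gamma$-almost sure event that can be intersected with the positive-probability set $D$. Once the nonempty intersection $D\cap G_\mu\cap G_\nu$ is secured, the remainder is routine.
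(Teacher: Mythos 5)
Your proposal is correct, and its first half coincides with the paper's own proof: both arguments metrize the state space as $S=(\tilde H^1,\|\cdot\|_{L^2})$ via Lemma~\ref{lem:Borel-equal}, take the determining class $\mathrm{BL}_1(S)$ from Proposition~\ref{prop:BL}, and use the $1$-Lipschitz bound $|\phi(u(t_n))-\phi(v(t_n))|\le\|u(t_n)-v(t_n)\|_{L^2}$ to convert the event $D$ into vanishing of Ces\`aro differences of test functions. The divergence is in the final step: the paper stops there and invokes \cite[Theorem~2.1]{GlattHoltzMattinglyRichards2017} as a black box, whereas you re-prove that abstract criterion from scratch --- passing to the time-$T$ skeleton, using ergodic decomposition to extract two distinct ergodic invariant measures, applying Birkhoff's theorem (via shift-ergodicity of the stationary path measure and disintegration) to produce $\mu$- and $\nu$-generic initial points, reducing $\mathrm{BL}_1(S)$ to a countable determining subfamily so the almost-sure Birkhoff events can be intersected with the positive-probability set $D$, and concluding $\int f_k\,d\mu=\int f_k\,d\nu$ for all $k$. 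This is essentially the mechanism inside the cited theorem, so your route is self-contained but longer; what it buys is transparency on two points the citation hides: (i) the hypothesis must be available for the \emph{generic} pair $(u_0,v_0)$ that Birkhoff's theorem hands you, i.e.\ the proposition is really quantified over all (or at least $\mu\otimes\nu$-almost all) pairs of initial data --- which is indeed how it is applied in the proof of Theorem~\ref{thm:unique}, where the same-noise coupling exists for every pair; and (ii) the countability bookkeeping needed to make the measure-determining class usable almost surely. Two minor remarks: the mutual singularity of distinct ergodic measures that you invoke is correct but never used (your contradiction is with $\mu\neq\nu$ directly), and your reduction to the skeleton chain is legitimate since every $(P_t)$-invariant measure is $P_T$-invariant.
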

\begin{proof}
 By Proposition~\ref{prop:BL}, the class $\mathrm{BL}_1(S)$
is measure-determining on $(S,\mathcal B(S))$.
By Lemma~\ref{lem:Borel-equal}, this is the same as
$(\tilde H^1,\mathcal B(\tilde H^1))$.
If $\Gamma(D)>0$ with
$$
D=\Big\{(u,v):\ \|u(t_n)-v(t_n)\|_{L^2}\to0\Big\},
$$
then for every $\phi\in\mathrm{BL}_1(S)$,
$$
\big|\phi(u(t_k))-\phi(v(t_k))\big|
\le \|u(t_k)-v(t_k)\|_{L^2},$$
which means
$$
\frac1n\sum_{k=1}^n\!\big(\phi(u(t_k))-\phi(v(t_k))\big)\longrightarrow 0
$$
on a set of positive $\Gamma$-measure. (In the notation of
\cite[Eq.~(2.1)]{GlattHoltzMattinglyRichards2017}, this is $\Gamma(\bar D_\phi)>0$
for all $\phi\in\mathcal G$.) Hence the hypotheses of
\cite[Theorem~2.1]{GlattHoltzMattinglyRichards2017} hold (with state space $H_0=\tilde H^1$),
and the associated Markov semigroup has at most one invariant probability measure on
$(\tilde H^1,\mathcal B(\tilde H^1))$.
\end{proof}

Finally we prove the main result.
\begin{proof}[Proof of Theorem~\ref{thm:unique}]
\

\noindent
By H\"older's and Young's inequalities, we have
\begin{equation}\label{ineq:Holder-Young}
\begin{split}
&\int_0^t C_0\!\Big(
\|\partial_1 u\|_{L^2}^{\frac{2}{3}}\|\partial_1\partial_2 u\|_{L^2}^{\frac{2}{3}}
+ \|\partial_2 u\|_{L^2}^{\frac{2}{3}}\|\partial_1\partial_2 u\|_{L^2}^{\frac{2}{3}}
\Big)\mathrm{d}s\\
&\le t
+ C_1\!\int_0^t \|\partial_1 u\|_{L^2}^2\,\mathrm{d}s
+ C_1\!\int_0^t \|\partial_1\partial_2 u\|_{L^2}^2\,\mathrm{d}s \\
&\quad + \tfrac{1}{2}\sup_{0\le s\le t}\|\partial_2 u(s)\|_{L^2}^2,
\end{split}
\end{equation}
where $C_1$ is a constant depending only on $C_0$.
Thus, by Lemma~\ref{lem:delta-estimate},
\begin{equation}\label{ineq:delta-t}
\begin{aligned}
\|\delta(t)\|_{L^2}^2
&\le \|\delta_0\|_{L^2}^2
   \exp\Bigg\{-2\lambda t + t
   + C_1\!\int_0^t \|\partial_1 u(s)\|_{L^2}^2\,\mathrm{d}s \\
&\qquad\qquad\quad
   + C_1\!\int_0^t \|\partial_1\partial_2 u(s)\|_{L^2}^2\,\mathrm{d}s
   + \tfrac{1}{2}\sup_{0\le s\le t}\|\partial_2 u(s)\|_{L^2}^2
   \Bigg\}.
\end{aligned}
\end{equation}
On the event $E_R^{(u)}$ defined in~\eqref{def:ER}, Lemmas~\ref{lem42}--\ref{lem43} give
\begin{equation}\label{ineq:ER-u}
\begin{split}
&C_1\!\int_0^t\!\|\partial_1 u(s)\|_{L^2}^2\,\mathrm{d}s
+ C_1\!\int_0^t\!\|\partial_1\partial_2 u(s)\|_{L^2}^2\,\mathrm{d}s
+ C_1\,\sup_{0\le s\le t}\|\partial_2 u(s)\|_{L^2}^2\\
&\le C_2\big(R + Kt + \|u_0\|_{H^1}^2\big),
\end{split}
\end{equation}
where $C_2$ is a constant depending only on $C_1$.
Combining  this with \eqref{ineq:delta-t} yields
\begin{equation}\label{ineq:delta-2-u}
\|\delta(t)\|_{L^2}^2
\le \|\delta_0\|_{L^2}^2
\exp\!\Big\{
(-2\lambda + 1 + C_2K)t + C_2(R + \|u_0\|_{H^1}^2)
\Big\}.
\end{equation}
Choosing
$$
\lambda > \tfrac{1 + C_2K}{2}
$$
ensures that the coefficient of $t$ in the exponent is negative. Consequently,
$$
\|\delta(t)\|_{L^2} \to 0
\quad\text{as } t\to\infty \text{ on } E_R^{(u)}.
$$
Define
$$
D := \Big\{
(u,v)\in(\tilde H^1)^{\mathbb N}\!\times(\tilde H^1)^{\mathbb N} :
\ \lim_{n\to\infty} \|u(t_n)-v(t_n)\|_{L^2} = 0
\Big\}.
$$
Then $E_R^{(u)} \subset D$, and since $\Gamma$ is the joint law of
$\big(u(t_n),v(t_n)\big)_{n\in\mathbb N}$,
$$
\Gamma(D)
= \PP\!\big((u(t_n),v(t_n))\in D\big)
\ge \PP\big(E_R^{(u)}\big)
> \tfrac{1}{3}.
$$
By Proposition~\ref{prop:coupling}, the Markov semigroup associated with~\eqref{eq:main}
admits at most one invariant probability measure on
$(\tilde H^1,\mathcal B(\tilde H^1))$.
\qedhere
\end{proof}

\section*{Acknowledgments}
S. Liang is partially  funded by Basic Research Program of Jiangsu, No. BK20251436 and  by the Fundamental Research Funds for the Central Universities in China, No. 30924010943. The author  was also supported by Deutsche Forschungsgemeinschaft (DFG) through the
program IRTG 2235.
The author is grateful to Rongchan Zhu for valuable discussions and suggestions.

\end{document}